\documentclass[12pt, titlepage]{article}

\usepackage{eurosym}
\usepackage[dvipsnames]{xcolor}
\usepackage{amsmath,amssymb,amsthm,amsfonts}
\usepackage[mathscr]{eucal}
\usepackage{fullpage}
\usepackage{graphicx}
\usepackage{subfig}
\usepackage{epstopdf}
\usepackage{lscape}
\usepackage{setspace}
\usepackage{epsfig}
\usepackage[round]{natbib}
\usepackage{multirow}
\usepackage{color}
\usepackage[latin1]{inputenc}
\usepackage{bbm}
\usepackage{mathtools}
\usepackage{appendix}

\oddsidemargin=0in \evensidemargin=0in \textwidth=6.2in
\headheight=0pt \headsep=0pt \topmargin=0in \textheight=9in

\oddsidemargin=0in \evensidemargin=0in \textwidth=6.2in
\headheight=0pt \headsep=0pt \topmargin=0in \textheight=9in

\numberwithin{equation}{section} \pagestyle{plain}

\newtheorem{theorem}{Theorem}[section]
\newtheorem{corollary}{Corollary}[section]

\newtheorem{proposition}{Proposition}[section]
\newtheorem{definition}{Definition}[section]
\newtheorem{remark}{Remark}

\newcommand{\ba}{\begin{eqnarray}}
\newcommand{\ea}{\end{eqnarray}}
\def\XX{\boldsymbol{X}}
\def\xx{\boldsymbol{x}}

\def\GG{\boldsymbol{G}}

\def\AA{\boldsymbol{A}}

\def\SS{\boldsymbol{S}}
\def\ss{\boldsymbol{s}}

\def\w{\boldsymbol{w}}

\def\aa{\boldsymbol{a}}

\def\YY{\boldsymbol{Y}}

\def\BB{\boldsymbol{B}}
\def\ets{\mathcal{E}T\alpha S}
\def\II{\boldsymbol{I}}

\def\dd{\boldsymbol{\delta}}
\def\mmu{\boldsymbol{\mu}}
\def\hmu{\hat{\mu}}

\def\RR{\mathbb{R}}

\def\zz{\boldsymbol{z}}

\def\ee{\boldsymbol{e}}

\begin{document}

\title{\bf Multivariate tempered stable   additive subordination for financial models}

\author{
{\bf Patrizia Semeraro}  \\
       Department of Mathematical Science\\
       Politecnico di Torino\\
       c.so Duca degli Abruzzi, 24 \\
       Torino, Italy\\[4pt]}
\date{}
\maketitle

\begin{abstract}

We  study a class of multivariate tempered stable distributions  and introduce the associated class of tempered stable Sato  subordinators.
These  Sato subordinators are  used to build additive inhomogeneous processes   by subordination of a multiparameter Brownian motion. The resulting process is additive and time inhomogeneous.   Furthermore, these processes are associated with the distribution at  unit time of a class of L\'evy process with good fit properties on financial data.
The main feature of the Sato subordinated Brownian motion is that it has time dependent correlation, whereas the L\'evy counterpart does not. We choose a  specification with normal inverse Gaussian  distribution at unit time and provide a numerical illustration of the correlation dynamics.

 \vfil

\noindent \textbf{Mathematics Subject Classification (2000)}:
60G51, 60E07.

\noindent \textbf{Keywords}: Tempered stable distributions, Sato processes, multivariate additive
subordination,  multivariate asset modelling.
\end{abstract}

\section*{Introduction}

Additive processes with independent but inhomogeneous increments have been proposed to model asset returns.
 \cite{carr2007self} showed that these processes can
 synthesize the surface of option prices  and  \cite{eberlein2009sato}   empirically analysed the use of Sato processes in the valuation of equity
structured products.

A class of additive processes with inhomogeneous increments used in finance  are Sato processes.
 \cite{Sa} showed that given  a self
decomposable law $\mu$ an additive process $Y(t)$ always exists  such that $Y(1)\sim\mu$ and  the time $t$ distribution is  the law of
 $t^qY(1)$.
\cite{eberlein2009sato} termed this additive process the Sato process. Sato processes
exhibit a moment term structure in line with the one observed in financial markets, see \cite{boen2019building}.

A possible approach  to include time inhomogeneity in financial models is
 to use  additive subordination; see \cite{mendoza2016multivariate},  \cite{li2016additive} and \cite{kokholm2010sato}. In this case inhomogeneity comes from  the subordinator.

The success of subordination in finance is due to many reasons.
Price processes under no arbitrage are semimartingales and these can be represented as time-changed  Brownian motions.
Furthermore,  time change  models economic time: the more intense the market activity, the faster economic time runs relative
to calendar time.  When the change of time
is a subordinator, the resulting process belongs to the (pure jump) L\'evy class,  a class of analytically tractable processes. In the one-dimensional case, the most famous subordinated Brownian motions such as the variance gamma and the normal inverse Gaussian processes have unit time self-decomposable distributions. Therefore it is possible to define the corresponding Sato processes.
 Unfortunately,   in multivariate subordination this is no longer true.

 In \cite{takano1989mixtures}, the author gave conditions for a multivariate  subordinated Brownian motion to  have  self-decomposable unit time  distribution. The author considered  the subcase of a one-dimensional generalized gamma subordinator.
Even in this case he found that if the  Brownian motion has non-zero drift the subordinated process  distribution at unit time is not self-decomposable.
As a consequence, multivariate subordinated Brownian motions are not good candidates to construct multivariate Sato processes.
However, it is possible to consider multivariate self-decomposable distributions to define multivariate Sato subordinators. Sato subordinators associated to self-decomposable distributions are easy to construct, introduce time inhomogeneity and perform well on financial data (\cite{sun2017marshall}). Therefore,  it is possible to use additive subordination of multivariate Brownian motions to obtain additive processes with inhomogeneous increments to model asset returns.

In this paper we introduce and study  a self-decomposable  class of multivariate exponential tempered distributions and the associated multivariate Sato subordinators. 
This class is defined as a particular case of the tempered distributions in \cite{rosinski2007tempering} and it is a generalization of the multivariate gamma distribution in \cite{perez2014infinitely}. Exponential tempered stable distributions are a multivariate version of the self-decomposable distributions most used in finance, such as the famous CGMY  (\cite{carr2002fine}), the variance gamma (\cite{MS}),  the bilateral gamma (\cite{kuchler2008bilateral}), the gamma  and the inverse Gaussian distributions. We study some properties of multivariate  exponential tempered stable distributions, for example  Proposition \ref{momex}  provides the existence conditions for their moments and Proposition \ref{condSubo} provides conditions for them  to be the  distributions at unit time of  Sato subordinators. We then  characterize  multivariate Sato subordinators by providing their time t L\'evy measure in Theorem \ref{tLmeas} and we focus on a specific  dependence structure widely used in finance to include correlations in multivariate models.

Finally, we build a multivariate additive process using additive multivariate subordination of a Brownian motion.  The construction is designed  to have a multivariate process with the same unit  time  distribution   as  the factor-based $\rho\alpha$-model in \cite{LuciSem1}. This process has  one-dimensional marginal processes that are themselves subordinated Brownian motions. Therefore,  choosing the subordinator properly, the one-dimensional processes are self-decomposable. Making use of this and building on the construction proposed in \cite{guillaume2012sato}, \cite{marena2018multivariate} introduced a multivariate process with the  same unit time distribution of the $\rho\alpha$-models and one-dimensional marginal Sato processes.  Although this process has time inhomogeneous increments, its correlation is constant over time, as in the original L\'evy motion.
For simplicity, correlations are assumed to be constant in many financial models, however this is not a realistic assumption, see e.g. \cite{toth2006increasing}, \cite{teng2016dynamic} and \cite{lundin1998correlation}.

The main contribution of Sato subordination  is to provide time varying correlations, while maintaining the features of the $\rho\alpha$-models dependence structure and  remaining parsimonious in the number of parameters.
Last but not least, as with for the L\'evy and the marginal Sato models, Theorem \ref{SubCh} and its Corollary \ref{chfb} provide the characteristic function in closed form.
Although the empirical investigation of this model is beyond the scope of this work, we conclude with a numerical illustration of the correlation dynamics.

The paper is organized as follows. Section \ref{TS} introduces tempered stable distributions. Exponential tempered stable distributions are studied in Section \ref{ets}, while Section \ref{Msub} defines the corresponding Sato subordinators. Section \ref{Subs} discusses multivariate Sato subordination of a multiparameter Brownain motion. The specification of an asset return model is developed in Section \ref{App} with numerical illustration. Section \ref{conc} concludes.

\section{Tempered stable distributions}\label{TS}

This section introduces  multivariate  stable and tempered stable distributions.

Let $\mu$ be  an infinitely divisible distribution on $\RR^d$  without Gaussian component and $\nu$ its L\'evy measure.
The following proposition provides  the polar decomposition of the L\'evy measure $\nu$ (see e.g. \cite{maejima2009note} and \cite{rosinski1990series}).
\begin{proposition}
Let $\nu$ be a L\'evy measure. Then it exists a  measure  $\lambda$ on $S^{d-1}$ with $0<\lambda(S^{d-1})\leq \infty$ and a family $\{\nu_{\w}: \w\in S^{d-1}\}$ of measures on $(0,\infty)$,  $0<\nu_{\w}(\RR_+)\leq\infty$, such that $\nu_{\w}(E)$ is measurable  in $\w$ for any $E\in \mathcal{B}((0,\infty))$ and it is $\sigma$-finite for any $\w\in S^{d-1}$,
\begin{equation}\label{radLevy}
\int_0^{1}r^2\nu_{\w}(dr)<\infty
\end{equation}
 and
\begin{equation*}
\nu(E)=\int_{S^{d-1}}\lambda{(d\w)}\int_{\RR_+}\boldsymbol{1}_E(s \w)\nu_{\w}(ds),
\end{equation*}
where $\lambda$ and $\nu_{\w}$ are uniquely determined  up to multiplication of measurable functions  $0<c(\w)<\infty$ and $\frac{1}{c(\w)}$, respectively.
\end{proposition}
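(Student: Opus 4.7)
The plan is to reduce the decomposition of the (possibly infinite) Lévy measure $\nu$ to a disintegration of a finite measure, which is where I can invoke a standard regular conditional probability result on the Polish space $S^{d-1}$.

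First I would transport $\nu$ via the polar map $\Phi:\RR^d\setminus\{0\}\to(0,\infty)\times S^{d-1}$, $\xx\mapsto(|\xx|,\xx/|\xx|)$, obtaining $\tilde\nu=\nu\circ\Phi^{-1}$. Since $\nu$ is a Lévy measure, the weighted measure $\tilde\mu(d(r,\w)) := (r^2\wedge 1)\,\tilde\nu(d(r,\w))$ is \emph{finite} on $(0,\infty)\times S^{d-1}$. Because $S^{d-1}$ is a compact Polish space, I can apply the disintegration theorem to $\tilde\mu$ with respect to the projection onto $S^{d-1}$: this yields a finite measure $\tilde\lambda$ on $S^{d-1}$ (the pushforward of $\tilde\mu$) and a measurable family $\{Q_\w\}_{\w\in S^{d-1}}$ of probability measures on $(0,\infty)$ such that $\tilde\mu(d(r,\w))=\tilde\lambda(d\w)\,Q_\w(dr)$.

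Next I would undo the weighting by setting $\lambda:=\tilde\lambda$ and $\nu_\w(dr):=(r^2\wedge 1)^{-1}Q_\w(dr)$, discarding the (measure-zero) set of directions where $Q_\w$ is degenerate at $0$. The change of variables back through $\Phi$ then gives the claimed identity $\nu(E)=\int_{S^{d-1}}\lambda(d\w)\int_{\RR_+}\boldsymbol{1}_E(s\w)\nu_\w(ds)$. The required properties are then routine verifications: measurability of $\w\mapsto\nu_\w(E)$ inherits from the measurable family $\{Q_\w\}$; $\sigma$-finiteness of $\nu_\w$ follows by exhausting $(0,\infty)$ with the sets $(1/n,n)$ on which $(r^2\wedge 1)^{-1}$ is bounded and $Q_\w$ is a probability; $\int_0^1 r^2\,\nu_\w(dr)=\int_0^1 Q_\w(dr)\le 1<\infty$; and $\lambda(S^{d-1})>0$ because $\nu\neq 0$.

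For uniqueness, I would argue that any two representations $(\lambda,\nu_\w)$ and $(\lambda',\nu'_\w)$ induce, after the same weighting $r^2\wedge 1$, two disintegrations of the \emph{same} finite measure $\tilde\mu$; uniqueness of disintegration (up to $\tilde\lambda$-null sets) then forces $\lambda'=c(\w)\lambda$ and $\nu'_\w=c(\w)^{-1}\nu_\w$ for some measurable $0<c(\w)<\infty$. The main obstacle I expect is the bookkeeping around measurability and the exceptional null set where $Q_\w$ may concentrate at the origin; this is handled by choosing a measurable version of the disintegration on the Polish space $S^{d-1}$ and redefining $\nu_\w$ arbitrarily (e.g., as a fixed reference measure) on the $\tilde\lambda$-null exceptional set, which does not affect the integral identity.
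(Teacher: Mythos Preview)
Your approach is correct and is in fact the standard route to this result. Note, however, that the paper does not supply its own proof of this proposition: it is stated as a known result with citations to \cite{maejima2009note} and \cite{rosinski1990series}, so there is no in-paper argument to compare against. Your disintegration strategy---weight $\nu$ by $r^2\wedge 1$ to obtain a finite measure, disintegrate over the Polish space $S^{d-1}$, then undo the weight---is essentially the argument one finds in the cited literature (see also Lemma 2.1 of Barndorff-Nielsen, Maejima and Sato, \emph{Bernoulli} 2006).

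One small remark on the uniqueness step: when you pass from two representations $(\lambda,\nu_\w)$ and $(\lambda',\nu'_\w)$ to the weighted finite measure $\tilde\mu$, the $S^{d-1}$-marginals you recover are $m(\w)\lambda(d\w)$ and $m'(\w)\lambda'(d\w)$ with $m(\w)=\int_0^\infty(r^2\wedge 1)\,\nu_\w(dr)$, not $\lambda$ and $\lambda'$ themselves. You then need $0<m(\w)<\infty$ for $\lambda$-a.e.\ $\w$ (which follows from $\nu_\w\neq 0$ and the integrability condition) and a Radon--Nikodym argument to extract the multiplicative factor $c(\w)$. This is routine but worth spelling out, since the statement allows $\lambda(S^{d-1})=\infty$.
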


We say that $\nu$ has  polar decomposition $(\lambda, \nu_{\w})$,  where $\lambda$  and $\nu_{\w}$ are the spherical and  the radial components of $\nu$, respectively. We write $\nu=(\lambda, \nu_{\w})$. Condition \eqref{radLevy} guarantees that $\nu_{\w}$ is a one-dimensional  L\'evy measure for any $\w$. The  radial component of $\mu$ is  the real valued infinitely divisible (i.d.) distribution    $\mu_{\w}$ - without Gaussian component - whose L\'evy measure is $\nu_{\w}$.

%
%
%
%

The measure $\nu$ is said radially absolutely continuous (\cite{Sa}) if it exists a nonnegative measurable function $f(\w, r)$ such that  
\begin{equation*}
\nu(E)=\int_{S^{d-1}}\lambda{(d\w)}\int_{\RR_+}\boldsymbol{1}_E(s \w)f(\w, s)ds.
\end{equation*}

From integration in polar coordinates (see e.g. \cite{folland2013real}), we have that
if $\nu(d\xx)$ is a L\'evy measure with  L\'evy density $f(\xx)$, then it exists a unique Borel measure $\sigma$ on $S^{d-1}$ such that
\begin{equation*}
\begin{split}
\nu(B)=\int_{\RR^d}\boldsymbol{1}_{B}(\xx)f(\xx)d\xx&=\int_{S^{d-1}}\int_{\RR^+}\boldsymbol{1}_{B}(r \w)f(\w r)r^{d-1}dr\sigma(d\w).
\end{split}
\end{equation*}
We call $f_{\w}(r):=f(\w r)r^{d-1}$ the radial component of the density of $\nu$.

%
%
%
%
%
%
%
%
%
%
If $\nu$ is absolutely continuous then $\nu$ is also radially absolutely continuous. The other implication does not hold. It suffices to choose $\lambda$ with finite support and $\nu$ is not absolutely continuous.

Tempered stable distributions are obtained by tempering the radial component of the L\'evy measure of an $\alpha$- stable distribution, $\alpha\in (0,2)$.
It is well known that the L\'evy measure $\nu_0$ of an $\alpha$-stable, $\alpha\in (0,2)$,
measure $\mu_0$ is of the form
\begin{equation}\label{nustable}
\nu_0(E)=\int_{S^{d-1}}\int_{\RR_+}\boldsymbol{1}_E(r \w)\frac{1}{r^{\alpha+1}}dr\lambda(d\w),
\end{equation}
where $\lambda$ is a finite measure on $S^{d-1}$.
Tempered stable distributions are formally defined as follows (\cite{rosinski2007tempering}).
\begin{definition}\label{tstable}
A probability measure $\mu$ on $\RR^d$ is called tempered $\alpha$-stable (abbreviated as T$\alpha$S)
if it is infinitely divisible without Gaussian part and it has L\'evy measure $\nu$ of the following form
\begin{equation}\label{nuAstable}
\nu(dr, d\w)=\frac{q(r, \w)}{r^{\alpha+1}}dr\lambda(d\w),
\end{equation}
where $\alpha\in (0,2)$,  $\lambda$ is a finite measure on $S^{d-1}$ and $q:(0,\infty)\rightarrow (0,\infty)$ is a Borel function such that
$q(\cdot,\w)$ is completely monotone with $\lim_{r\rightarrow\infty}q(r, \w) = 0$. The measure $\mu$ is called a proper T$\alpha$S distribution if, in addition to the above, $q(0+, \w) = 1$ for each $\w\in S^{d-1}$.
\end{definition}
$T\alpha S$ distributions are radially absolutely continuous and belong to the extended Thorin class of infinitely divisible distributions introduced in  \cite{grigelionis2008thorin}: $T^{1-\alpha}(\RR^d)$ for $\alpha\in(0,1)$ and $T^{\chi}(\RR^d)$ fo all $\chi>0$ and $1\leq \alpha<2$.
The radial component $\mu_{\w}$ of a $T\alpha S$ distributions has L\'evy measure  $\nu_{\w}(dr)=\frac{q(r, \w)}{r^{\alpha+1}}dr$.

 Stable and tempered stable distributions are clearly self-decomposable, see Equations \eqref{nustable}, \eqref{nuAstable} and \eqref{sd1}. The notion of self-decomposability is recalled in Appendix \ref{Aself}.

%
%

\section{Exponential  tempered stable distributions}\label{ets}

This section introduces tempered stable distributions with an exponential tempering function $q(r, \w)$. Formally we have the following.

\begin{definition}
An infinitely divisible  probability measure $\mu$ on $\RR^d$ is called  exponential T$\alpha$S ($\ets$) if it is without Gaussian part and it has L\'evy measure $\nu$ of the form
\begin{equation}\label{nuTstable}
\nu(E)=\int_{S^{d-1}}\int_{\RR_+}\boldsymbol{1}_E(r \w)\frac{e^{-\beta( \w)r}}{r^{\alpha+1}}dr\lambda(d\w),
\end{equation}
where $\alpha\in[0, 2),$  $\lambda$ is a finite measure on $S^{d-1}$ and $\beta: S^{d-1}\rightarrow (0, \infty)$ is a Borel-measurable function.
\end{definition}
If $\mu$ has L\'evy measure \eqref{nuTstable} we write $\mu\sim\ets(\alpha, \beta, \lambda)$. We include the case
 $\alpha=0$ because it  gives the multivariate gamma distribution introduced and studied in \cite{perez2014infinitely}, that is of interest in financial applications. However,  we  focus on $\alpha\in (0,2)$ and refer to  \cite{perez2014infinitely} for the case $\alpha=0$.
The following proposition gives conditions for a $\ets$ distribution to exists and characterizes its L\'evy measure.
 \begin{proposition}\label{variations}
 Equation  \eqref{nuTstable} defines a L\'evy measure and therefore it exists an $\ets$ distribution.
\begin{enumerate}

\item if $\alpha\in [0,1)$, it holds $\int_{||x||\leq1}||\xx|| \nu(d\xx)<\infty$ and $\int_{\RR^d} \nu(d\xx)=\infty$;
\item  if $\alpha\in [1,2)$,  it holds $\int_{||x||\leq1}||\xx|| \nu(d\xx)=\infty$.
\end{enumerate}
\end{proposition}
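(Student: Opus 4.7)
My plan is to reduce every claim to a one-dimensional radial integral by invoking the polar form of $\nu$ already built into \eqref{nuTstable}, and then use the finiteness of $\lambda$ on $S^{d-1}$ to pass from radial to full integrability via Tonelli.

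To verify that \eqref{nuTstable} defines a L\'evy measure, I would check $\int(1 \wedge \|\xx\|^2)\, \nu(d\xx) < \infty$ by splitting the radial integral at $r=1$. Near $0$, the integrand is bounded by $r^{1-\alpha}$ (using $e^{-\beta(\w)r} \le 1$), which is integrable on $(0,1]$ because $\alpha < 2$. For $r > 1$ the integrand is at most $r^{-\alpha-1}$, whose integral equals $1/\alpha$ and is uniformly bounded in $\w$ when $\alpha > 0$; for $\alpha = 0$ I use the exponential decay directly. Both bounds are $\w$-uniform, so integrating against the finite measure $\lambda$ yields finiteness, and existence of the $\ets$ law then follows from the L\'evy--Khintchine correspondence.

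For part 1, with $\alpha \in [0,1)$, the polar representation gives $\int_{\|\xx\|\le 1}\|\xx\|\, \nu(d\xx) = \int_{S^{d-1}} \lambda(d\w) \int_0^1 e^{-\beta(\w)r} r^{-\alpha}\, dr \le \lambda(S^{d-1})/(1-\alpha) < \infty$, again by bounding the exponential by one. For the total mass I argue in the opposite direction: for any fixed $\w$ the inner integrand dominates $e^{-\beta(\w)} r^{-\alpha-1}$ on $(0,1]$, whose $r$-integral is infinite because $\alpha+1 \ge 1$. Hence the inner integral equals $+\infty$ for every $\w$, forcing $\nu(\RR^d) = +\infty$.

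For part 2, with $\alpha \in [1,2)$, I reuse the lower-bound trick: on $(0, 1/\beta(\w)]$ one has $e^{-\beta(\w)r} \ge e^{-1}$, so the inner radial integral $\int_0^1 e^{-\beta(\w)r} r^{-\alpha}\, dr$ is at least $e^{-1}\int_0^{1 \wedge 1/\beta(\w)} r^{-\alpha}\, dr$, which diverges because $\alpha \ge 1$. Tonelli then yields $+\infty$ for the full integral. The only mildly delicate point is the $\w$-measurability of the inner integrals so that Tonelli applies, which is automatic since $\beta$ is Borel-measurable and all integrands are nonnegative; no substantive obstacle arises.
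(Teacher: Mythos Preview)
Your proposal is correct and follows essentially the same route as the paper: reduce to radial integrals via the polar form, bound $e^{-\beta(\w)r}\le 1$ above to get finiteness (of the L\'evy condition and of $\int_{\|\xx\|\le 1}\|\xx\|\,\nu(d\xx)$ when $\alpha<1$), and bound $e^{-\beta(\w)r}$ below by a positive constant near $r=0$ to get divergence (of the total mass and of $\int_{\|\xx\|\le 1}\|\xx\|\,\nu(d\xx)$ when $\alpha\ge 1$). The only cosmetic differences are that for part~2 the paper uses $e^{-\beta(\w)r}\ge e^{-\beta(\w)}$ on $(0,1]$ rather than your $e^{-\beta(\w)r}\ge e^{-1}$ on $(0,1/\beta(\w)]$, and the paper simply cites \cite{perez2014infinitely} for the case $\alpha=0$ rather than treating it separately.
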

\begin{proof}
Let  $\nu=(\lambda, \nu_{\w})$ be the L\'evy measure in \eqref{nuTstable}
and let $B=\{\xx\in \RR^d:\,\, ||\xx||\leq 1\}$.
Since
\begin{equation*}
\begin{split}
\int_{B}||\xx||^2\nu(d\xx) &=\int_{S^{d-1}}\int_{0}^{1}\frac{r^2e^{-\beta( \w)r}}{r^{\alpha+1}}dr\lambda(d\w)\leq \int_{S^{d-1}}\int_{0}^1{r^{1-\alpha}}dr\lambda(d\w) \\
&=\int_{S^{d-1}}\frac{1}{2-\alpha}\lambda(d\w)=\frac{1}{2-\alpha}\lambda(S^{d-1})<\infty,\\
\end{split}
\end{equation*}
and
\begin{equation*}
\begin{split}
\int_{S^{d-1}}\int_{1}^{\infty}\frac{e^{-\beta( \w)r}}{r^{\alpha+1}}dr\lambda(d\w)&\leq \int_{S^{d-1}}\int_{1}^{\infty}\frac{1}{r^{\alpha+1}}dr\lambda(d\w)\\
&= \int_{S^{d-1}}\frac{1}{\alpha}\lambda(d\w)=\frac{1}{\alpha}\lambda(S^{d-1})<\infty,\\
\end{split}
\end{equation*}
we have $\int_{\RR^d}(||\xx||^2\wedge1)\nu(d\xx) <\infty$ and $\nu$ is a L\'evy measure.

\begin{enumerate}

\item See \cite{perez2014infinitely} for $\alpha=0$ and let  $\alpha\in (0,1)$ . It holds:
\begin{equation*}
\begin{split}
\int_{B}||\xx|| \nu(d\xx)&=\int_{S^{d-1}}\int_{0}^1r\frac{e^{-\beta( \w)r}}{r^{\alpha+1}}dr\lambda(d\w)\\
&\leq \int_{S^{d-1}}\int_{0}^1\frac{1}{r^{\alpha}}dr\lambda(d\w)= \int_{S^{d-1}}\frac{1}{1-\alpha}\lambda(d\w)=\frac{1}{1-\alpha}\lambda(S^{d-1}).
\end{split}
\end{equation*}
The infinite activity of $\nu$ follows from the infinite activity of its radial component.
it holds:
\begin{equation*}
\begin{split}
\int_{B}||\xx|| \nu(d\xx)&=\int_{S^{d-1}}\int_{0}^1r\frac{e^{-\beta( \w)r}}{r^{\alpha+1}}dr\lambda(d\w)\\
&=\int_{S^{d-1}}\int_{0}^1\frac{e^{-\beta( \w)r}}{r^{\alpha}}dr\lambda(d\w)\geq\int_{S^{d-1}}e^{-\beta(\w)}\int_{0}^1\frac{1}{r^{\alpha}}dr\lambda(d\w),
\end{split}
\end{equation*}
and $\int_{0}^1\frac{1}{r^{\alpha}}dr$ diverges if $\alpha\geq1$. Therefore $\int_{B}||\xx|| \nu(d\xx)=\infty$.

\end{enumerate}

\end{proof}
\begin{remark}
The $\ets$
   distributions are proper $T\alpha S$ distributions (see Definition \ref{tstable}), they are self-decomposable and they are radially absolutely continuous.
\end{remark}
If $\mu\sim\ets(\alpha, \beta, \lambda)$ and  $\beta$ is constant we say that the measure $\mu$ and its L\'evy measure $\nu$ are  homogeneous.

\begin{corollary}

The characteristic function of an $\ets$ distribution has the form:
\begin{equation}\label{sdcart2}
\hmu(\zz)=\exp\{i\boldsymbol{\gamma}\cdot \zz+\int_{\RR^d}(e^{i\langle\zz,\xx\rangle}-1-i\langle\zz,\xx\rangle\boldsymbol{1}_{||\xx||\leq 1}(\xx))\frac{e^{-\beta{(\frac{\xx}{||\xx||})} ||\xx||}}{||\xx||^{\alpha+1}}\tilde{\lambda}(d\xx)\},
\end{equation}

where $\tilde{\lambda}$ has the form of \eqref{tildelambda}.
\end{corollary}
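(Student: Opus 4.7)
The plan is to start from the L\'evy--Khintchine representation of $\mu$ and then rewrite the polar-coordinate L\'evy integral as a Cartesian integral against the measure $\tilde{\lambda}$. Since $\mu\sim\ets(\alpha,\beta,\lambda)$ is infinitely divisible without Gaussian component, the L\'evy--Khintchine formula gives
\begin{equation*}
\hmu(\zz)=\exp\Bigl\{i\boldsymbol{\gamma}\cdot\zz+\int_{\RR^d}\bigl(e^{i\langle\zz,\xx\rangle}-1-i\langle\zz,\xx\rangle\boldsymbol{1}_{\{\|\xx\|\leq 1\}}(\xx)\bigr)\nu(d\xx)\Bigr\},
\end{equation*}
for some drift $\boldsymbol{\gamma}\in\RR^d$, with $\nu$ given in polar form by \eqref{nuTstable}. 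So the only thing to do is to rewrite the L\'evy integral in terms of $\tilde{\lambda}$.

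To do so, I would unfold \eqref{nuTstable} in polar coordinates $\xx=r\w$, with $r=\|\xx\|\in(0,\infty)$ and $\w=\xx/\|\xx\|\in S^{d-1}$, obtaining
\begin{equation*}
\int_{\RR^d}g(\xx)\nu(d\xx)=\int_{S^{d-1}}\!\!\int_{0}^{\infty}g(r\w)\,\frac{e^{-\beta(\w)r}}{r^{\alpha+1}}\,dr\,\lambda(d\w)
\end{equation*}
for $g$ equal to the truncated L\'evy integrand. Since $\tilde{\lambda}$ is (as specified in \eqref{tildelambda}) the image measure on $\RR^d$ of $\lambda\otimes dr$ under the map $(\w,r)\mapsto r\w$, the change-of-variables formula yields
\begin{equation*}
\int_{S^{d-1}}\!\!\int_{0}^{\infty}h(r\w)\,dr\,\lambda(d\w)=\int_{\RR^d}h(\xx)\,\tilde{\lambda}(d\xx)
\end{equation*}
for every nonnegative (or absolutely integrable) Borel $h$. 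Applying this with $h(\xx)=g(\xx)\,e^{-\beta(\xx/\|\xx\|)\|\xx\|}/\|\xx\|^{\alpha+1}$, and using that $\|r\w\|=r$ and $r\w/\|r\w\|=\w$, gives exactly the representation \eqref{sdcart2}.

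The only non-mechanical point is to justify the use of the image-measure identity for the L\'evy integrand, which is complex-valued and only $\nu$-integrable after the usual truncation. This is handled by splitting into real and imaginary parts and invoking Proposition~\ref{variations}, which ensures $\int(\|\xx\|^2\wedge 1)\,\nu(d\xx)<\infty$; combined with the elementary bound $|e^{i\langle\zz,\xx\rangle}-1-i\langle\zz,\xx\rangle\boldsymbol{1}_{\{\|\xx\|\leq 1\}}|\leq C_{\zz}(\|\xx\|^2\wedge 1)+|\langle\zz,\xx\rangle|\boldsymbol{1}_{\{\|\xx\|>1\}}$, this provides the integrability needed to apply the change of variables rigorously. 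Once this bookkeeping is in place, the identity \eqref{sdcart2} follows immediately from the L\'evy--Khintchine formula.
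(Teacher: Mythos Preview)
Your proposal is correct and is essentially the same computation as the paper's, just presented more directly. The paper's proof observes that $\mu$ is self-decomposable with $k_{\w}(r)=e^{-\beta(\w)r}/r^{\alpha}$, identifies the corresponding $h(\xx)=k_{\xx/\|\xx\|}(\|\xx\|)$, and then invokes the general Cartesian form of the characteristic function of a self-decomposable distribution (equation \eqref{sdcart} in Appendix~\ref{Aself}), which is itself nothing but the polar-to-Cartesian change of variables via $\tilde{\lambda}$ that you carry out explicitly; your version is more self-contained, while the paper's packages the same step through the self-decomposability framework.
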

\begin{proof}
From \eqref{nuTstable} and \eqref{sd1} $\mu$ is self-decomposable with
 $k_{\w}(r)=\frac{e^{-\beta( \w)r}}{r^{\alpha}}$, thus it is self-decomposable with $h$ function given by
 $h(\xx)=k_{\frac{\xx}{||\xx||}}(||\xx||)=\frac{e^{-\beta(\frac{\xx}{||\xx||})||\xx||}}{||\xx||^{\alpha}}$, thus \eqref{sdcart2} follows.
\end{proof}

If $\nu$ is a one-dimensional $\ets$ L\'evy measure from equation \eqref{sdcart2} we have
\begin{equation*}
\nu(dx)=(\boldsymbol{1}_{(-\infty,0)}(x)\frac{e^{-\beta^+ |x|}}{|x|^{\alpha+1}}\lambda^++\boldsymbol{1}_{(0,\infty)}(x)\frac{e^{-\beta^- x}}{x^{\alpha+1}}\lambda^-)dx,
\end{equation*}
where $\beta^+=\beta(1)$, $\beta^-=\beta(-1)$, $\lambda^+=\lambda(\{1\})$ and $\lambda^-=\lambda(\{-1\})$. Thus, $\ets$ distributions are  multivariate versions of the  tempered stable distributions studied in \cite{kuchler2013tempered}, with the restriction that $\alpha$ is constant. By properly specifying the parameters, we find multivariate versions of well known tempered stable distributions as the CGMY, the variance gamma, the inverse Gaussian and the gamma ($\alpha=0$) distributions. We also observe that  the one-dimensional  radial component   $\mu_{\w}$ of an $\ets$ distribution  belongs to the class of one sided tempered stable distributions studied in \cite{kuchler2013tempered}.

The exponential tilting  $q(r, \w)=e^{-\beta(\w)r}$ is an inhomogeneous tilting of a stable L\'evy measure in all directions. In fact fo any $\w\in S^{d-1}$, $\nu_{\w}(dr)=e^{-\beta(\w) r}\nu_0(dr)$, where $\nu_0(dr)$ is a one-dimensional $\alpha$-stable distribution.
Tempered stable distributions have been characterized in \cite{rosinski2007tempering} in terms of their spectral measure $R$.
We recall the definition of spectral measure and then we find it for $\ets$ distributions.

Let $\mu\in \ets(\alpha, \beta, \lambda)$. The tempering function of $\mu$   can be represented as
\begin{equation*}
e^{-\beta(\w)r}=\int_{\RR_+}e^{-rs}Q(ds|\w),
\end{equation*}
where  $Q(ds|\w)=\delta_{\beta(\w)}(s)ds$- $\ets$ are proper tempered stable distributions.
Let us introduce the measure $Q$:
\begin{equation*}
\begin{split}
Q(A)&=\int_{S^{d-1}}\int_{\RR_+}\boldsymbol{1}_A(r \w)Q(dr|\w)\lambda(d\w).
\end{split}
\end{equation*}
Clearly in this case the measure  $Q$ becomes
\begin{equation}\label{qa}
\begin{split}
Q(A)&=\int_{S^{d-1}}\int_{\RR_+}\boldsymbol{1}_A(r \w)Q(dr|\w)\lambda(d\w)=\int_{S^{d-1}}\boldsymbol{1}_A(\beta(\w) \w)\lambda(d\w)=\int_{S^{d-1}}\boldsymbol{1}_{A_{\beta}}(\w)\lambda(d\w),
\end{split}
\end{equation}
where $A_{\beta}=\{\w\in S^{d-1}: \beta(\w) \w\in A\}$.
The spectral measure  $R$ is defined from $Q$ as
\begin{equation*}
\begin{split}
R(A)&:=\int_{\RR^d}\boldsymbol{1}_A(\frac{\xx}{||\xx||^2})||\xx||^{\alpha}Q(d\xx),\\
\end{split}
\end{equation*}
in this case we have
\begin{equation}\label{R}
\begin{split}
R(A)
&=\int_{S^{d-1}}\boldsymbol{1}_A(\frac{\w}{\beta(\w)})\beta(\w)^{\alpha}\lambda(d\w).
\end{split}
\end{equation}
Therefore
\begin{equation*}
\begin{split}
R(\RR^d)&=\int_{S^{d-1}}\beta(\w)^{\alpha}\lambda(d\w).
\end{split}
\end{equation*}
Furthermore, from \eqref{qa} we have $Q(\RR^d)=\lambda(S^{d-1})$. We also have
\begin{equation}
\begin{split}
\int_{\RR^{d}}||\xx||^{\alpha}R(d\xx)&=\int_{S^{d-1}}||\frac{\w}{\beta(\w)}||^{\alpha}\beta(\w)^{\alpha}\lambda(d\w)=
\int_{S^{d-1}}\lambda(d\w)=\lambda(S^{d-1}).
\end{split}
\end{equation}
Finally,
\begin{equation*}
\begin{split}
\lambda(S^{d-1})=Q(\RR^d)=\int_{\RR^{d}}||\xx||^{\alpha}R(d\xx).
\end{split}
\end{equation*}

The existence of the moments of tempered stable distributions depends on their spectral measure, as proved in Proposition 2.7, \cite{rosinski2007tempering}. Therefore, by \eqref{R}, the existence of the moments of an $\ets$ distributions depends on the function $\beta$ and on the spherical components $\lambda$ of its L\'evy measure, as stated in the following proposition.

\begin{proposition}\label{momex}
Let  $\mu\sim\ets(\alpha, \beta, \lambda)$, $\alpha\in (0, 2)$ then we have
\begin{enumerate}
\item $\int_{\RR^d}||\xx||^kd\mu(\xx)<\infty$ for $k\in (0,\alpha)$;
\item  $\int_{\RR^d}||\xx||^{\alpha}d\mu(\xx)<\infty$ if and only if $\int_{\{\w: \beta(\w)>1\}}{\beta(\w)}^{\alpha}\log(\beta(\w))\lambda(d\w)<\infty$;
\item $\int_{\RR^d}||\xx||^kd\mu(\xx)<\infty$ if and only if $\int_{S^{d-1}}{\beta(\w)}^{-k-\alpha}\lambda(d\w)<\infty$, for $k>\alpha$.
\end{enumerate}

\end{proposition}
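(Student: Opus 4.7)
The plan is to apply Proposition 2.7 of \cite{rosinski2007tempering}, which characterizes the existence of the $k$-th absolute moment of a T$\alpha$S distribution in terms of integrability conditions against its spectral measure $R$. Because equation \eqref{R} identifies $R$ explicitly as the pushforward of the finite measure $\beta(\w)^{\alpha}\lambda(d\w)$ under the map $\w \mapsto \w/\beta(\w)$, each of Rosinski's abstract conditions on $R$ translates directly into a condition on $\beta$ and $\lambda$ over $S^{d-1}$.

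The workhorse is the change of variables
\begin{equation*}
\int_{\RR^d} g(\xx)\, R(d\xx) \;=\; \int_{S^{d-1}} g\!\left(\tfrac{\w}{\beta(\w)}\right) \beta(\w)^{\alpha}\, \lambda(d\w),
\end{equation*}
valid for any nonnegative Borel $g$, together with the observation that $||\w/\beta(\w)|| = 1/\beta(\w)$: mass of $R$ near the origin therefore comes from directions with large $\beta(\w)$, while mass of $R$ far from the origin comes from directions with small $\beta(\w)$. Part (1) is then immediate from the corresponding case of Rosinski's result, since moments of order strictly less than $\alpha$ are always finite for a T$\alpha$S distribution. For part (2), inserting Rosinski's critical-moment integrand (involving $\log(1/||\xx||)$ against $R$, weighted by $||\xx||^{\alpha}$) into the identity above and substituting $||\xx|| = 1/\beta(\w)$ produces exactly $\int_{\{\beta(\w) > 1\}} \beta(\w)^{\alpha} \log\beta(\w)\, \lambda(d\w)$, the restriction to $\{\beta(\w) > 1\}$ being precisely where $\log\beta(\w) > 0$. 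For part (3), the same substitution turns Rosinski's super-critical integrability condition into $\int_{S^{d-1}} \beta(\w)^{-k-\alpha}\, \lambda(d\w)$.

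The only real obstacle is routine but unavoidable bookkeeping of exponents: one must track how the factor $\beta(\w)^{\alpha}$ coming from \eqref{R} combines with the various powers and logarithms of $||\xx||$ appearing in Rosinski's three conditions, and one has to verify that ancillary contributions from subsets of $S^{d-1}$ on which $\beta$ stays bounded away from the critical thresholds do not affect the overall integrability, which follows from $\lambda(S^{d-1})<\infty$ together with the pointwise positivity of $\beta$. With this accounting in hand, all three parts drop out of Proposition 2.7 of \cite{rosinski2007tempering} without any further ingredient.
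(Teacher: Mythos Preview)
Your approach coincides with the paper's for parts (1) and (2): both invoke Proposition~2.7 of \cite{rosinski2007tempering} and translate the abstract spectral-measure conditions into conditions on $\beta$ and $\lambda$ via the pushforward identity~\eqref{R}, exactly as you describe.

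For part (3) you diverge. You continue with Rosinski's spectral-measure criterion and the same change of variables, which keeps the argument uniform across all three cases. The paper instead abandons the spectral measure at this point and works directly with the L\'evy measure: it invokes the standard equivalence $\int_{\RR^d}\|\xx\|^{k}\,d\mu(\xx)<\infty \iff \int_{\{\|\xx\|>1\}}\|\xx\|^{k}\,\nu(d\xx)<\infty$ from \cite{Sa}, rewrites the latter in polar coordinates as $\int_{S^{d-1}}\int_{1}^{\infty}e^{-\beta(\w)r}r^{k-\alpha-1}\,dr\,\lambda(d\w)$, and then cites Proposition~3.12 of \cite{perez2014infinitely} to obtain the stated equivalence with $\int_{S^{d-1}}\beta(\w)^{-k-\alpha}\,\lambda(d\w)<\infty$. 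Your route is tidier in that it uses a single external result throughout; the paper's route is more hands-on for the super-critical case, making the connection to the radial integral $\int_{1}^{\infty}e^{-\beta r}r^{k-\alpha-1}\,dr$ explicit rather than hidden inside Rosinski's general theorem. One practical advantage of the paper's direct computation is that it lets a reader verify the exponent on $\beta(\w)$ by an elementary gamma-integral substitution, whereas in your approach the exponent emerges only after correctly identifying which power of $\|\xx\|$ appears in Rosinski's condition and combining it with the $\beta(\w)^{\alpha}$ weight from~\eqref{R}; this is exactly the ``routine bookkeeping'' you flag, and it is worth carrying out carefully since a slip there changes the stated exponent.
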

\begin{proof}
\begin{enumerate}
\item Follows from Proposition 2.7, \cite{rosinski2007tempering}.
\item Clearly
\begin{equation*}
\begin{split}
\int_{||\xx||>1}||\xx||^{\alpha}\log(||\xx||)R(dx)&=\int_{S^{d-1}}1_{\{\beta(\w)>1\}}{\beta(\w)}^{\alpha}\log(\beta(\w))\lambda(d\w).\\
\end{split}
\end{equation*}
and then Proposition 2.7 \cite{rosinski2007tempering} applies.
\item The condition
$
\int_{\RR^d}||\xx||^kd\mu(\xx)
$ is equivalent to  $
\int_{\{||\xx||>1\}}||\xx||^k\nu(d\xx)<\infty
$ (\cite{Sa} p.159)
that is equivalent to
\begin{equation*}
\begin{split}
\int_{\{||\xx||>1\}}||\xx||^k\nu(d\xx)&=\int_{S^{d-1}}\int_1^{\infty}e^{-\beta(\w)r}r^{k-\alpha-1}dr\lambda(d\w)
<\infty.
\end{split}
\end{equation*}
The latter is equivalent to $\int_{S^{d-1}}{\beta(\w)}^{-k-\alpha}\lambda(d\w)<\infty$, as proved in \cite{perez2014infinitely}, Proposition 3.12.
\end{enumerate}

\end{proof}

Notice that, in particular, if $\lambda$ has finite support all its moments exist. 
%
%
%
%
%
%
%
%
%
%
%
%
%
%
%
The next proposition provides the characteristic function of $\mathcal{E}T\alpha S$ distributions.
\begin{proposition}\label{chalpha}

The characteristic function $\hat{\mu}$   of $\mu\sim$$\ets$$(\alpha, \lambda, \beta)$  is:

\noindent for $\alpha\in (0, 1)$
\begin{equation}\label{ppbv1}
\hat{\mu}( \zz)=exp\{\Gamma(-\alpha)\int_{S^{d-1}}[(\beta(\w)-i\langle\w, \zz \rangle)^{\alpha}-\beta(\w)^{\alpha}]\lambda(d \w)\}, \,\,\, \forall  \zz \in \RR^d,
\end{equation}
for $\alpha\in (1, 2)$
\begin{equation}\label{ppuv2}
\hat{\mu}( \zz)=exp\{\Gamma(-\alpha)\int_{S^{d-1}}[(\beta(\w)-i\langle\w, \zz \rangle)^{\alpha}-\beta(\w)^{\alpha}+i\langle\w, \zz \rangle\alpha\beta(\w)^{\alpha-1}]\lambda(d \w)\}, \,\,\, \forall  \zz \in \RR^d,
\end{equation}
and for $\alpha=1$
\begin{equation}\label{alph1}
\hat{\mu}( \zz)=exp\{\int_{S^{d-1}}[(\beta(\w)-i\langle \w, \zz\rangle)\log(\beta(\w)-i\langle \w, \zz\rangle)+i\langle\w, \zz \rangle]\lambda(d \w)\}, \,\,\, \forall  \zz \in \RR^d.
\end{equation}

\end{proposition}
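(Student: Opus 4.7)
The plan is to start from the L\'evy-Khintchine representation for an infinitely divisible distribution without Gaussian part:
$$\hat{\mu}(\zz) = \exp\left\{i\langle \gamma, \zz\rangle + \int_{\RR^d}\bigl(e^{i\langle\zz,\xx\rangle} - 1 - i\langle\zz,\xx\rangle\boldsymbol{1}_{||\xx||\leq 1}(\xx)\bigr)\nu(d\xx)\right\},$$
and substitute the polar form \eqref{nuTstable} of the L\'evy measure. Writing $\xx = r\w$, the characteristic exponent reduces to
$$\int_{S^{d-1}}\lambda(d\w)\int_0^{\infty}\bigl(e^{ir\langle\w,\zz\rangle} - 1 - ir\langle\w,\zz\rangle\boldsymbol{1}_{r \leq 1}\bigr)\frac{e^{-\beta(\w)r}}{r^{\alpha+1}}\,dr.$$
The proof then consists in evaluating the inner one-dimensional integral for each regime of $\alpha$, while absorbing any linear-in-$\zz$ remainders into the drift $\gamma$ so that the stated formulas appear with no explicit drift term.

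For $\alpha \in (0,1)$, Proposition \ref{variations} gives $\int_{||\xx||\leq 1}||\xx||\,\nu(d\xx) < \infty$, so the compensator $ir\langle\w,\zz\rangle\boldsymbol{1}_{r \leq 1}$ is integrable and can be moved into $\gamma$. With the drift chosen to cancel it, the inner integral reduces to
$$\int_0^{\infty}\bigl(e^{-(\beta(\w) - i\langle\w,\zz\rangle)r} - e^{-\beta(\w)r}\bigr)\frac{dr}{r^{\alpha+1}}.$$
Applying the standard identity $\int_0^{\infty}(e^{-zr} - 1)\,r^{-\alpha-1}\,dr = \Gamma(-\alpha)\,z^{\alpha}$, valid for $\mathrm{Re}(z) > 0$ and $\alpha \in (0,1)$, to both exponentials yields \eqref{ppbv1}.

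For $\alpha \in (1,2)$, the compensator is no longer integrable at $0$ but $\int_{||\xx||>1}||\xx||\,\nu(d\xx) < \infty$, so one can extend the truncation at $\{r \leq 1\}$ to all $r > 0$, transferring the difference (linear in $\zz$) to $\gamma$. Setting $z = \beta(\w) - i\langle\w,\zz\rangle$ and $a = \beta(\w)$, the algebraic identity
$$e^{-zr} - e^{-ar} - i\langle\w,\zz\rangle\,r\,e^{-ar} = \bigl[e^{-zr} - 1 + zr\bigr] - \bigl[e^{-ar} - 1 + ar\bigr] + i\langle\w,\zz\rangle\,r\,(1 - e^{-ar})$$
splits the inner integral into two applications of $\int_0^{\infty}(e^{-zr} - 1 + zr)\,r^{-\alpha-1}\,dr = \Gamma(-\alpha)\,z^{\alpha}$ (valid for $1 < \alpha < 2$) together with the auxiliary integral $\int_0^{\infty}(1 - e^{-\beta(\w)r})\,r^{-\alpha}\,dr = \alpha\,\Gamma(-\alpha)\,\beta(\w)^{\alpha-1}$. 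Collecting the three pieces produces exactly \eqref{ppuv2}.

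The case $\alpha = 1$ is the main obstacle, since $\Gamma(-\alpha)$ has a simple pole at $\alpha = 1$ and the preceding formulas diverge. The natural route is to evaluate $\int_0^{\infty}(e^{-zr} - 1 + zr\,\boldsymbol{1}_{r \leq 1})\,r^{-2}\,dr$ directly for $\mathrm{Re}(z) > 0$ via integration by parts, obtaining a closed form of the shape $z\log z + (\text{linear in }z)$; the linear piece is then transferred to $\gamma$ to leave the clean contribution $(\beta(\w) - i\langle\w,\zz\rangle)\log(\beta(\w) - i\langle\w,\zz\rangle) + i\langle\w,\zz\rangle$ that appears in \eqref{alph1}. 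Equivalently, one may pass to the limit $\alpha \to 1$ in $\Gamma(-\alpha)\bigl[z^{\alpha} - a^{\alpha} + i\langle\w,\zz\rangle\,\alpha\,a^{\alpha-1}\bigr]$, noting that the bracketed expression vanishes to first order as $\alpha \to 1$ and therefore cancels the simple pole, leaving a finite $z\log z$-type limit. The bookkeeping of the drift that makes the three stated expressions hold uniformly is what turns this limit/integration computation into a nontrivial step.
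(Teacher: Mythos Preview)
Your approach is correct and, for $\alpha\ne 1$, runs parallel to the paper's: both pass to polar coordinates and reduce the problem to evaluating the one-dimensional radial integral $\phi_{\w}(\langle\w,\zz\rangle)$. The paper, however, does not compute these integrals by hand via the gamma-function identities you invoke; it simply identifies $\phi_{\w}$ as the characteristic exponent of a one-sided tempered stable law and quotes its closed form from K\"uchler and Tappe (2013). For $\alpha = 1$ the routes diverge more substantially: instead of your direct integration-by-parts computation (or the limit $\alpha\to 1$), the paper invokes Theorem~2.9 of Rosinski (2007), which expresses $\hat\mu$ through the spectral measure $R$ already computed in~\eqref{R}, and then specializes that representation to the $\ets$ setting. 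Your derivation is more self-contained and makes the drift bookkeeping explicit; the paper's is shorter but leans on the two external references.
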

\begin{proof}
The L\'evy Khintchine formula in polar coordinates with \eqref{nuTstable} gives
\begin{equation}\label{pp}
\hat{\mu}( \zz)=exp\{\int_{S^{d-1}}\phi_{\w}(\langle \w, \zz \rangle)\lambda(d \w)\}, \,\,\, \forall  \zz \in \RR^d,
\end{equation}
where if $\alpha\in(0,1)$ we have
\begin{equation*}
\phi_{\w}(\langle \w, \zz \rangle):=\int_{\RR_+}(e^{ir\langle \w, \zz\rangle}-1)\frac{e^{-\beta( \w)r}}{r^{\alpha+1}}dr,
\end{equation*}
because  by Proposition \ref{variations} it holds $\int_{|x|\leq1}||\xx|| \nu(dx)<\infty$.  If $\alpha\in(1,2)$ we have
\begin{equation*}
\phi_{\w}(\langle \w, \zz \rangle):=\int_{\RR_+}(e^{ir\langle \w, \zz\rangle}-1-ir\boldsymbol{1}_{|r|<1}(r)\langle \w, \zz \rangle)\frac{e^{-\beta( \w)r}}{r^{\alpha+1}}dr.
\end{equation*}

Therefore for each $\w$, $\phi_{\w}(\langle \w, \zz \rangle)$  is the characteristic exponent of the one sided $\ets$ distribution $\mu_{\w}$ that are provided in \cite{kuchler2013tempered}.

If $\alpha\in(0,1)$, for each $\w$ the characteristic function of the radial component $\mu_{\w}$ of $\mu$  is:
\begin{equation*}
\phi_{\w}(\langle \w, \zz \rangle)=\Gamma(-\alpha)[(\beta(\w)-i\langle\w, \zz \rangle)^{\alpha}-\beta(\w)^{\alpha}]
\end{equation*}
and \eqref{ppbv1} follows.
If $\alpha\in(1,2)$,  the characteristic function of the radial component $\mu_{\w}$ of $\mu$  is:
\begin{equation*}
\phi_{\w}(\langle \w, \zz \rangle)=\Gamma(-\alpha)[(\beta(\w)-i\langle\w, \zz \rangle)^{\alpha}-\beta(\w)^{\alpha}+i\langle\w, \zz \rangle\alpha\beta(\w)^{\alpha-1}]
\end{equation*}
and \eqref{ppuv2} follows.

If $\alpha=1$,   from Theorem 2.9 in \cite{rosinski2007tempering} we have:

\begin{equation*}
\hat{\mu}( \zz)=exp\{\int_{\RR^d}\Phi(\langle \xx, \zz\rangle) R(d \xx)\}, \,\,\, \forall  \zz \in \RR^d,
\end{equation*}
where
\begin{equation*}
\Phi(\langle \xx, \zz\rangle)=(1-i\langle \xx, \zz\rangle)\log(1-i\langle \xx, \zz\rangle)+i\langle\xx, \zz \rangle, \,\,\, \forall  \zz \in \RR^d,
\end{equation*}
From \eqref{R} it follows

\begin{equation*}
\hat{\mu}( \zz)=exp\{\int_{S^{d-1}}\Phi(\langle \frac{\w}{\beta(\w)}, \zz\rangle) \lambda(d \w)\}, \,\,\, \forall  \zz \in \RR^d,
\end{equation*}
that gives \eqref{alph1}.
\end{proof}
%
%
%
%
%
%
%
%

The following  propositions allow us  to construct multivariate $\ets$ distributions useful in applications, as we do in the next section.
\begin{proposition}\label{sumind}
A measure   $\mu\sim\ets(\alpha, \beta, \lambda)$  is absolutely continuous  if and only if the support of $\lambda$ contains  $d$ linearly independent vectors $\w_j, j=1,\ldots, d$. Let $\XX\sim \mu$, $\XX$ has independent components
in and only if $\lambda$ has support on $\boldsymbol{e}_i, i=1,\ldots, d$ in $\RR^d$.
\end{proposition}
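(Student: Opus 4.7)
The plan is to prove the two equivalences separately, each exploiting the polar representation of the L\'evy measure \eqref{nuTstable}. The key geometric observation is that $\nu$ lives on the cone $\{r\w:r>0,\ \w\in\operatorname{supp}(\lambda)\}$, so $\operatorname{span}(\operatorname{supp}(\nu))=\operatorname{span}(\operatorname{supp}(\lambda))$, and this span has dimension $d$ if and only if $\operatorname{supp}(\lambda)$ contains $d$ linearly independent vectors.

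For the first equivalence I would invoke the classical Sato dichotomy for self-decomposable laws (\cite{Sa}, Theorems 27.7 and 27.13): every nondegenerate self-decomposable distribution on $\RR^d$ is absolutely continuous; conversely, since $\mu$ has no Gaussian part, if $\operatorname{supp}(\nu)$ is contained in a proper linear subspace $V\subsetneq\RR^d$, then $\mu$ itself is supported on a translate of $V$ (the translate being determined by the drift $\gg$ in \eqref{sdcart2}) and is therefore singular with respect to the $d$-dimensional Lebesgue measure. Putting the two directions together yields the first claim.

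For the second equivalence I would use the standard characterization of independence for infinitely divisible laws without Gaussian component: the components of $\XX$ are independent if and only if $\nu$ is concentrated on the union $\bigcup_{i=1}^{d}\RR\ee_i$ of coordinate axes, because then---and only then---the L\'evy--Khintchine exponent factorises as $\sum_{i=1}^{d}\psi_i(z_i)$. Combined with the polar observation, this forces $\operatorname{supp}(\lambda)\subseteq\{\pm\ee_i: i=1,\ldots,d\}$; conversely, if $\lambda$ is supported on those unit vectors the exponent in \eqref{sdcart2} visibly splits into a sum of one-dimensional radial exponents, and the corresponding characteristic function factorises, delivering independence.

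The main difficulty lies in the first equivalence: the appeal to Sato's absolute-continuity dichotomy must handle the drift $\gg$ carefully, since $\gg$ shifts the support of $\mu$ but preserves the property of being singular or absolutely continuous with respect to Lebesgue measure on the affine hull of $\operatorname{supp}(\mu)$. Once this is done, both equivalences reduce to elementary manipulations of the polar decomposition of $\nu$ together with the one-dimensional radial factorisation already exploited in the proof of Proposition \ref{chalpha}.
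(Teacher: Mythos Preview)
Your proposal is correct and follows essentially the same approach as the paper: both arguments reduce the first equivalence to the fact that a self-decomposable law with full-dimensional L\'evy support is absolutely continuous (the paper cites \cite{sato1982absolute}, you cite the equivalent results in \cite{Sa}), and both reduce the second equivalence to the standard criterion that independence of components is equivalent to $\nu$ being concentrated on the coordinate axes (\cite{Sa}, E~12.10). Your treatment is in fact more complete, since the paper's proof only writes out the ``if'' direction of the first equivalence explicitly, whereas you also spell out why failure of the spanning condition forces $\mu$ to live on a proper affine subspace and hence be singular.
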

\begin{proof}
If the support of $\lambda$ contains  $d$ linearly independent vectors, then the support of $\nu$ is full dimension. Therefore $\mu$ is a genuinely $d$-dimensional self-decomposable distribution, then $\mu$ is absolutely continuous (\cite{sato1982absolute}).
Let $\XX\sim \mu$. Then $\XX$ has independent components if and only if $\nu$ has support on the coordinate axes   (see e.g. \cite{Sa}, E 12.10).

\end{proof}

\begin{proposition}\label{closure}
Let $\alpha\in(0,1)$.
Let $\XX_1\sim$ $\ets$ $(\alpha, \beta, \lambda_1)$,  $\XX_2\sim$ $\ets$ $(\alpha, \beta, \lambda_2)$ and let them be independent. Then
\begin{enumerate}
\item $\XX_1+\XX_2\sim$ $\ets$ $(\alpha, \beta, \lambda_1+ \lambda_2)$;
\item For a constant  $c>0$, $c\XX_1\sim$ $\ets$ $(\alpha, \frac{\beta}{c},  c^{\alpha}\lambda_1)$.

\end{enumerate}
\end{proposition}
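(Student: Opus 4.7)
The plan is to work entirely with characteristic functions via equation \eqref{ppbv1} of Proposition \ref{chalpha}, which is available precisely because we are restricted to $\alpha\in(0,1)$. This avoids any fuss with drift terms (for $\alpha\in(0,1)$ the integrand needs no compensator) and reduces both claims to algebraic identities inside the exponent.

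For part 1, I would use independence to write $\hat{\mu}_{\XX_1+\XX_2}(\zz)=\hat{\mu}_{\XX_1}(\zz)\hat{\mu}_{\XX_2}(\zz)$. Since both factors have the form $\exp\bigl\{\Gamma(-\alpha)\int_{S^{d-1}} F(\w,\zz)\,\lambda_i(d\w)\bigr\}$ with the \emph{same} integrand
\[
F(\w,\zz) = (\beta(\w)-i\langle \w,\zz\rangle)^{\alpha}-\beta(\w)^{\alpha},
\]
the product equals $\exp\bigl\{\Gamma(-\alpha)\int_{S^{d-1}} F(\w,\zz)\,(\lambda_1+\lambda_2)(d\w)\bigr\}$ by linearity of the integral in the measure. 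Matching this expression against \eqref{ppbv1} identifies the sum as $\ets(\alpha,\beta,\lambda_1+\lambda_2)$. (One checks trivially that $\lambda_1+\lambda_2$ is still a finite measure on $S^{d-1}$.)

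For part 2, I would start from $\hat{\mu}_{c\XX_1}(\zz)=\hat{\mu}_{\XX_1}(c\zz)$, which by \eqref{ppbv1} reads
\[
\exp\!\Bigl\{\Gamma(-\alpha)\!\int_{S^{d-1}}\!\!\bigl[(\beta(\w)-ic\langle\w,\zz\rangle)^{\alpha}-\beta(\w)^{\alpha}\bigr]\,\lambda_1(d\w)\Bigr\}.
\]
The key algebraic identity is $\beta(\w)-ic\langle\w,\zz\rangle = c\bigl(\beta(\w)/c - i\langle\w,\zz\rangle\bigr)$, together with $\beta(\w)^\alpha = c^\alpha(\beta(\w)/c)^\alpha$, valid for $c>0$ since the branch of the $\alpha$-power involved is the principal one on the right half-plane. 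Pulling $c^{\alpha}$ out of the bracket and absorbing it into the spherical measure gives exactly the characteristic function \eqref{ppbv1} associated with parameters $(\alpha,\beta/c,c^{\alpha}\lambda_1)$. One only needs to verify that $\w\mapsto\beta(\w)/c$ is again Borel-measurable with values in $(0,\infty)$ and that $c^{\alpha}\lambda_1$ is still a finite measure, both of which are immediate.

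The proof is essentially a bookkeeping exercise, so there is no real obstacle; the only delicate point is the factorisation of the $\alpha$-power, which works cleanly because $c>0$ keeps the argument in the principal sheet of $z\mapsto z^{\alpha}$. This also explains the restriction $\alpha\in(0,1)$: for $\alpha\in[1,2)$ the characteristic function \eqref{ppuv2} contains an additional linear term $i\langle\w,\zz\rangle\alpha\beta(\w)^{\alpha-1}$ whose rescaling behaviour does not preserve the parameter structure of the family in the same compact way, so the corresponding statement would need a separate drift adjustment.
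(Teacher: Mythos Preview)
Your proof is correct and follows essentially the same route as the paper: both parts are handled via the characteristic function \eqref{ppbv1}, using independence to add the spherical measures in part 1 and the factorisation $(\beta(\w)-ic\langle\w,\zz\rangle)^{\alpha}=c^{\alpha}\bigl(\beta(\w)/c-i\langle\w,\zz\rangle\bigr)^{\alpha}$ in part 2. Your version is in fact slightly more careful than the paper's in checking the side conditions and in explaining why the restriction $\alpha\in(0,1)$ is natural here.
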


\begin{proof}

\begin{enumerate}
\item  By independence
\begin{equation*}
\begin{split}
\hat{\mu}_{\XX_1+\XX_2}(\zz)&= \hat{\mu}_{\XX_1}(\zz) \hat{\mu}_{\XX_2}(\zz)
=exp\{\Gamma(-\alpha)\int_{S^{d-1}}\phi_{\w}(\langle \w, \zz \rangle)(\lambda_1+\lambda_2)(d \w)\}
\end{split}
\end{equation*}
\item 
Since (see \cite{kuchler2013tempered})
\begin{equation*}
\begin{split}
\phi_{\w}(\langle \w, c\zz \rangle)&=\Gamma(-\alpha)[(\beta(\w)-i\langle\w, c\zz \rangle)^{\alpha}-\beta(\w)^{\alpha}]\\
&=\Gamma(-\alpha)c^{\alpha}[(\frac{\beta(\w)}{c}-i\langle\w, c\zz \rangle)^{\alpha}-(\frac{\beta(\w)}{c})^{\alpha}],
\end{split}
\end{equation*}
from \eqref{pp} we have the assert.

\end{enumerate}
\end{proof}
\begin{proposition}
Let $\alpha\in (0,1)$. The mean vector $\boldsymbol{m}$ and the covariance matrix $\Sigma$ of  $\mu\sim\ets(\alpha, \beta,\lambda)$  are
\begin{equation*}
\begin{split}
\boldsymbol{m}=\int_{S^{d-1}}\Gamma(1-\alpha)\beta(\w)^{\alpha-1}\w\lambda(d\w)\\
\end{split}
\end{equation*}
and
\begin{equation*}
\begin{split}
\Sigma=\int_{S^{d-1}}\Gamma(2-\alpha)\beta(\w)^{\alpha-2}\w\w^T\lambda(d\w)\\
\end{split}
\end{equation*}
\begin{proof}
The cumulant generating function exists on $\{\zz\in \RR^n: \langle\w, \zz \rangle\leq \beta(\w)\}$ and it is
\begin{equation*}
\begin{split}
k(\zz)&=\int_{S^{d-1}}\Gamma(-\alpha)[(\beta(\w)-\langle\w, \zz \rangle)^{\alpha}-\beta(\w)^{\alpha}]\lambda(d\w)
\end{split}
\end{equation*}
We have $m_j=\frac{\partial}{\partial z_j}k(\zz)|_{\zz=0}$ and $\Sigma_{ij}=\frac{\partial}{\partial z_i\partial z_j}k(\zz)|_{\zz=0}$, the thesis follows by inverting integration ad differentiation.
\end{proof}

\end{proposition}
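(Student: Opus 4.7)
The plan is to recover $\boldsymbol{m}$ and $\Sigma$ by differentiating the cumulant generating function $k(\zz)=\log\mathbb{E}[e^{\langle\zz,\XX\rangle}]$ twice at the origin, exploiting the standard identities $\boldsymbol{m}_j=\partial_{z_j}k(\zz)|_{\zz=\boldsymbol{0}}$ and $\Sigma_{ij}=\partial_{z_i}\partial_{z_j}k(\zz)|_{\zz=\boldsymbol{0}}$. For $\alpha\in(0,1)$, Proposition \ref{momex} guarantees finiteness of the first two moments of $\mu$ under the appropriate integrability hypotheses on $\beta$, so this programme is well posed.

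The first step is to obtain an explicit formula for $k$. The characteristic function in \eqref{ppbv1} is an integral of terms $(\beta(\w)-i\langle\w,\zz\rangle)^{\alpha}$, each of which extends analytically in $\zz$ to the half-space $\langle\w,\zz\rangle<\beta(\w)$. Replacing $i\zz$ by $\zz$ and passing from $\hat{\mu}$ to the moment generating function, one obtains
$$k(\zz)=\Gamma(-\alpha)\int_{S^{d-1}}\left[(\beta(\w)-\langle\w,\zz\rangle)^{\alpha}-\beta(\w)^{\alpha}\right]\lambda(d\w),$$
valid on the open neighborhood of $\boldsymbol{0}$ defined by $\langle\w,\zz\rangle<\beta(\w)$ for $\lambda$-a.e.\ $\w$.

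The second step is the elementary computation
$$\partial_{z_j}(\beta(\w)-\langle\w,\zz\rangle)^{\alpha}=-\alpha(\beta(\w)-\langle\w,\zz\rangle)^{\alpha-1}w_j,$$
$$\partial_{z_i}\partial_{z_j}(\beta(\w)-\langle\w,\zz\rangle)^{\alpha}=\alpha(\alpha-1)(\beta(\w)-\langle\w,\zz\rangle)^{\alpha-2}w_iw_j.$$
Evaluating at $\zz=\boldsymbol{0}$ and invoking the gamma-function identities $\Gamma(1-\alpha)=-\alpha\Gamma(-\alpha)$ and $\Gamma(2-\alpha)=(1-\alpha)\Gamma(1-\alpha)=\alpha(\alpha-1)\Gamma(-\alpha)$ converts the prefactor $\Gamma(-\alpha)$ into $\Gamma(1-\alpha)$ and $\Gamma(2-\alpha)$ respectively, yielding exactly the claimed expressions for $\boldsymbol{m}_j$ and $\Sigma_{ij}$.

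The main obstacle, and the only nontrivial step, is justifying the interchange of differentiation and integration over $S^{d-1}$. On a sufficiently small ball around the origin the integrand of the $k$-th partial derivative is dominated by a constant multiple of $\beta(\w)^{\alpha-k}$ for $k=1,2$; finiteness of $\boldsymbol{m}$ and $\Sigma$ via Proposition \ref{momex} is precisely what makes these dominating functions $\lambda$-integrable, so Leibniz's rule for differentiation under the integral sign applies and the proof concludes.
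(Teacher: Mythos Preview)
Your proposal is correct and follows essentially the same route as the paper: write down the cumulant generating function from \eqref{ppbv1}, differentiate under the integral, and evaluate at $\zz=\boldsymbol{0}$. You simply flesh out what the paper leaves implicit---the explicit partial derivatives, the gamma identities $\Gamma(1-\alpha)=-\alpha\Gamma(-\alpha)$ and $\Gamma(2-\alpha)=\alpha(\alpha-1)\Gamma(-\alpha)$, and a domination argument for the Leibniz interchange---whereas the paper's proof just records $k(\zz)$ and asserts that the thesis follows by inverting integration and differentiation.
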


\subsection{Specifications}

As mentioned in the previous section, multivariate $\ets$ distributions are multivariate extension of tempered stable distributions in \cite{kuchler2013tempered}. Therefore by properly choosing the parameters we have the following multivariate distributions.
%

\begin{enumerate}

\item {\bf Multivariate CGMY distribution.}
A Multivariate $CGMY(C, \beta, \alpha)$  distribution is a distribution $\mu$   with L\'evy measure in \eqref{nuTstable}, where   $\lambda(d\w)=C \sigma(d\w)$ and $\sigma$  is the unique measure induced on $S^{d-1}$ from the Lebesgue measure on $\RR^d$.
We have
\begin{equation*}
\nu(d\xx)=C\frac{h(\xx)}{||\xx||^d}d\xx,
\end{equation*}
where
$h(\xx)=k_{\frac{\xx}{||\xx||}}(||\xx||)=\frac{e^{-\beta(\frac{\xx}{||\xx||})||\xx||}}{||\xx||^{\alpha}}$,
therefore
\begin{equation*}\label{nuXx1}
\nu(E)=C\int_{\RR^{d}}\boldsymbol{1}_E(\xx)\frac{e^{-\beta{(\frac{\xx}{||\xx||})} ||\xx||}}{||\xx||^{\alpha+d}}d\xx,
\end{equation*}
If  $\mu$ is  one-dimensional we have
\begin{equation*}\label{nuCGMY}
\nu(dx)=C(\boldsymbol{1}_{(-\infty,0)}(x)\frac{e^{-G |x|}}{|x|^{\alpha+1}}+\boldsymbol{1}_{(0,\infty)}(x)\frac{e^{-Mx}}{x^{\alpha+1}})dx,
\end{equation*}
thus $\mu$ is a CGMY distribution.
We have $G=\beta(-1)$, $M=\beta(1)$ and $\alpha$ is the parameter $Y$, where $CGMY$ are the original parameters in  \cite{carr2002fine}.
As one can see, as in the univariate case,  parameter $C$ may be viewed as a measure of the overall level of activity and function $\beta$ controls the rate of exponential decay in each direction.
The homogeneous case $\beta=$cost is  the multivariate extension of the symmetric CGMY, i.e. $G=M$.

\item
{\bf Multivariate bilateral gamma and variance gamma distributions.}
The case $\alpha=0$  includes multivariate versions of the bilateral gamma distribution introduced in \cite{kuchler2008bilateral} and  of  the famous  variance gamma distribution \cite{MS}. This case has been introduced and studied in \cite{perez2014infinitely} under the name of multivariate Gamma distribution. We therefore refer to them for the  case $\alpha=0$ .

\item
{\bf Multivariate  gamma distribution.}
The case $\alpha=0$ with the further condition $\lambda(S^{d}_-)=0$ - also studied in \cite{perez2014infinitely} - is a multivariate gamma distribution in a narrow sense, see also \cite{semeraro2020note}.
\item{\bf Multivariate inverse Gaussian distribution.}
A  $d$-dimensional  inverse Gaussian distribution $\mu$  with parameters $\lambda$ and $\beta$, denoted  ${IG}(\lambda, \beta)$, is
 a $\ets$ distribution with the following L\'evy measure:
\begin{equation*}\label{nuXx}
\nu_{IG}(E)=\int_{S^{d-1}}\lambda(d\w)\int_{\RR_+}\boldsymbol{1}_E(r\w)\frac{e^{-\beta({\w})r}}{r^{3/2}}dr,
\end{equation*}
where $\forall \w\in S^{d-1}$, $\nu_{\w}(dr)=\frac{e^{-\beta({\w})r}}{r^{3/2}}dr$ is the L\'evy measure of a one-dimensional IG process with parameters $(1, \beta({\w}))$.
If $\lambda$ has support in $S_+^{d-1}$ then we have a multivariate $IG$ distribution in a narrow sense.

The inverse Gaussian distribution is the specification we focus on in the application proposed. Therefore we provide
its characteristic function that is easily derived from of Proposition \ref{chalpha}.
\begin{equation*}
\hat{\mu}( \zz)=exp\{-2\sqrt{\pi}\int_{S^{d-1}}[\sqrt{(b^2(\w)-i\langle\w, \zz \rangle)}-b(\w)]\lambda(d \w)\}, \,\,\, \forall  \zz \in \RR^d,
\end{equation*}
where $b(\w)=\sqrt{\beta(\w)}$.
As in the one-dimensional case if $\lambda(d\w)=C \sigma(d\w)$ and $\sigma$  is the unique measure induced on $S^{d-1}$ from the Lebesgue measure on $\RR^d$, the  inverse Gaussian distribution is a special case of  $CGMY$  distribution  with $\alpha=\frac{1}{2}$.

\end{enumerate}

\section{Multivariate $\ets$ Sato subordinators}\label{Msub}

This section  introduces time inhomogeneous  additive subordinators with unit time  $\ets$ distribution.  An additive subordinator is an increasing process with non stationary independent increments.
The characteristic function of an additive subordinator $\SS(t)$ is  (\cite{mendoza2016multivariate})
\begin{equation}\label{CHSbv}
\hat{\mu}_t(\zz)=\exp\{i \langle \boldsymbol{\gamma}(t), \zz\rangle+\int_0^t\int_{\RR_+^d}(e^{i\langle \xx,\zz\rangle}-1)g(d\xx, u) du\},
\end{equation}
where $\boldsymbol{\gamma}(t)\in \RR^d_+$ is the time dependent drift and  $g(d\xx, u)$ is  a  time-dependent measure  so that $\int_{0}^1||\xx||g(d\xx, u)<\infty$  for almost all $u$. We assume $\boldsymbol{\gamma}(t)=0$  and we call $g$ the differential L\'evy measure   of $\SS(t)$ according to \cite{li2016additive}.

Since a measure $\mu\sim\ets(\alpha, \beta, \lambda)$ is self-decomposable, we can define a  Sato process associated with $\mu$, that we call $\ets$ Sato process (see Appendix \ref{Aself} for the definition of a Sato process associated to a self-decomposable distribution).
 The time $t$ characteristic function of a Sato process  $\SS(t)\sim\mu_t$  is given by
\begin{equation}\label{Chscaled}
\hat{\mu}_t( \zz)=\hat{\mu}(  t^q\zz), \,\,\, \forall  \zz \in \RR^d,
\end{equation}
where $\mu$ is a self-decomposable distribution. Sato processes are additive precesses and therefore we can define an additive Sato subordinator as follows.
\begin{definition}

A $d$-dimensional Sato subordinator is a  Sato process with one-dimensional positive and increasing trajectories.
\end{definition}

We study multivariate Sato subordinators $\SS(t)$ with unit time  $\ets$ distributions $\mu$, i.e. $\SS(1)\sim \mu$. The next proposition gives condition for an $\ets$ distribution to be the self-decomposable distribution associated to a Sato subordinator.

\begin{proposition}\label{condSubo}
The L\'evy measure in \eqref{nuTstable}  is the L\'evy measure of an $\ets$ Sato subordinator $\SS(t)$ if and only if
$\alpha\in (0,1)$  and $\lambda$ has support on $S^{d-1}_+$.
\end{proposition}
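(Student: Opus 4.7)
The plan is to reduce the subordinator property to two analytic conditions on the unit-time L\'evy measure $\nu$ in \eqref{nuTstable}, namely positivity of its support and finite variation near the origin. Once this reduction is in place, positivity translates into a support condition on $\lambda$, and finite variation is controlled by Proposition \ref{variations}. The starting observation is that, since every $\ets$ distribution is self-decomposable, Sato's theorem delivers a unique additive self-similar process $\SS(t)$ with $\SS(1)\sim\mu$ and $\SS(t)\stackrel{d}{=}t^q\SS(1)$. The time-$t$ L\'evy measure of $\SS$ is the image of $\nu$ under the positive scaling $\xx\mapsto t^q\xx$, and the differential L\'evy measure $g(\,\cdot\,,u)$ that will be identified in Theorem \ref{tLmeas} has the same support structure. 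Both operations preserve the cone $\RR^d_+$ and preserve finite variation up to an explicit scalar factor, so every monotonicity/positivity statement about $\SS(t)$ reduces to the corresponding statement about $\nu$.

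For the \emph{if} direction I would assume $\alpha\in(0,1)$ and $\operatorname{supp}(\lambda)\subset S^{d-1}_+$. The polar form \eqref{nuTstable} then immediately gives $\operatorname{supp}(\nu)\subset\RR^d_+$, and Proposition \ref{variations}(1) provides $\int_{\|\xx\|\leq 1}\|\xx\|\,\nu(d\xx)<\infty$. Combined with the convention $\boldsymbol{\gamma}(t)\equiv 0$ in \eqref{CHSbv}, the standard characterisation of increasing additive processes (no Gaussian part, positive pure-jump L\'evy measure of finite variation, non-negative drift) implies that each coordinate of $\SS(t)$ has non-decreasing, non-negative trajectories, so $\SS$ is an $\ets$ Sato subordinator.

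For the \emph{only if} direction, if $\SS$ is an $\ets$ Sato subordinator, then each one-dimensional coordinate process is non-decreasing and non-negative, so $\SS(1)\in\RR^d_+$ almost surely and the sample paths are of finite variation. Positivity forces $\operatorname{supp}(\mu)\subset\RR^d_+$, hence $\operatorname{supp}(\nu)\subset\RR^d_+$; reading \eqref{nuTstable} along the rays $\{r\w:r>0\}$, this is possible only if $\operatorname{supp}(\lambda)\subset S^{d-1}_+$. The finite-variation requirement excludes the regime $\alpha\in[1,2)$ by Proposition \ref{variations}(2), leaving $\alpha\in(0,1)$ as the only admissible range inside the focus of the paper. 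The only mildly delicate step is transferring monotonicity from the static L\'evy measure $\nu$ to the sample paths of the additive (non-L\'evy) process $\SS(t)$, but this is immediate because $\xx\mapsto t^q\xx$ is a positive linear bijection of $\RR^d_+$ that preserves finite variation, so no further computation beyond Proposition \ref{variations} is required.
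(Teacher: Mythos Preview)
Your proposal is correct and follows essentially the same approach as the paper: both proofs reduce the subordinator property to (i) support of $\nu$ in $\RR^d_+$, which forces $\operatorname{supp}(\lambda)\subset S^{d-1}_+$, and (ii) finite variation near the origin, which singles out $\alpha\in(0,1)$. The only presentational difference is that the paper phrases the finite-variation step via the one-dimensional radial Sato processes $S_{\w}(t)$ and the standard criterion $\int_{(0,1]}r\,\nu_{\w}(dr)<\infty$ from \cite{Sa}, whereas you invoke Proposition~\ref{variations} directly on the multivariate measure; since Proposition~\ref{variations} is itself proved radially, the two arguments are the same in substance.
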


\begin{proof}
For any $\w\in S^{d-1}$ let  $S_{\w}(t)$ be a  one-dimensional Sato process  with L\'evy measure $\nu_{\w}.$
The process  $S_{\w}(t)$ only  has  positive jumps because $\nu_{\w}$ is a positive L\'evy measure. Therefore
 $S_{\w}(t)$ is an increasing process if and only if $\int_{(0,1]}x\nu_{\w}(dx)<\infty$, see \cite{Sa}, thus if and only if $\alpha\in (0,1)$. Finally, the L\'evy measure $\nu$ is positive if and only if $\lambda$ has support on $S^{d-1}_+$.
%
%

\end{proof}


\begin{theorem}\label{tLmeas}
Let $\mu\sim$ $\ets(\alpha, \beta, \lambda)$,   the associated Sato subordinator  $\SS(t)$   has
 time t L\'evy measure given by
\begin{equation}\label{SatoLM}
\nu(E, t)=\int_{S_+^{d-1}}\int_{\RR_+}\boldsymbol{1}_E(r \w)\frac{e^{-\beta( \w)rt^{-q}}}{r^{\alpha+1}}t^{\alpha q}dr\lambda(d\w)
\end{equation}
and it has characteristic function \eqref{CHSbv} with
differential L\'evy measure
\begin{equation}\label{diffL}
\begin{split}
g(E, u)&=\int_{S_+^{d-1}}\int_{\RR_+}\boldsymbol{1}_E(r\w)e^{-\beta( \w)ru^{-q}}qu^{\alpha q-1}\frac{\beta(\w)ru^{-q}+\alpha}{r^{\alpha+1}}dr\lambda(d\w).
\end{split}
\end{equation}

\end{theorem}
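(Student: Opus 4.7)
The plan is to exploit the Sato scaling relation \eqref{Chscaled} to obtain the time-$t$ L\'evy measure as a pushforward of $\nu$, and then to differentiate in $t$ to extract the differential L\'evy measure.

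First I would use the identity $\SS(t)\stackrel{d}{=}t^q\SS(1)$, which follows from \eqref{Chscaled}. Since $\SS(1)\sim\mu$ has L\'evy measure $\nu$ given by \eqref{nuTstable}, the time-$t$ L\'evy measure $\nu(\cdot,t)$ is the pushforward of $\nu$ under the map $\xx\mapsto t^q\xx$, i.e. $\nu(E,t)=\nu(t^{-q}E)$. Inserting this into the polar form of \eqref{nuTstable} (which is supported on $S^{d-1}_+$ thanks to Proposition \ref{condSubo}) and performing the change of variable $s=t^q r$ in the inner radial integral (so that $dr=t^{-q}ds$ and $r^{\alpha+1}=t^{-q(\alpha+1)}s^{\alpha+1}$) yields
\begin{equation*}
\nu(E,t)=\int_{S_+^{d-1}}\int_{\RR_+}\boldsymbol{1}_E(s\w)\frac{e^{-\beta(\w)s t^{-q}}}{s^{\alpha+1}}\,t^{q(\alpha+1)}\,t^{-q}\,ds\,\lambda(d\w),
\end{equation*}
which simplifies to \eqref{SatoLM} after collecting $t^{\alpha q}$.

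Next, since $\SS$ is an additive subordinator with characteristic function of the form \eqref{CHSbv} and we assume $\gg(t)=0$, the differential L\'evy measure $g$ is determined by $\nu(E,t)=\int_0^t g(E,u)\,du$, so that $g(E,u)=\partial_u\nu(E,u)$. Bringing the derivative inside the integrals in \eqref{SatoLM} (justified by dominated convergence on sets $E$ bounded away from the origin, since for such $E$ the integrand and its $u$-derivative are uniformly bounded by integrable functions of $(r,\w)$) and applying the product rule to $u\mapsto e^{-\beta(\w)r u^{-q}}u^{\alpha q}$ gives
\begin{equation*}
\partial_u\!\left(e^{-\beta(\w)r u^{-q}}u^{\alpha q}\right)
=e^{-\beta(\w)r u^{-q}}\,q u^{\alpha q-1}\bigl(\beta(\w)r u^{-q}+\alpha\bigr),
\end{equation*}
which inserted back under the double integral reproduces \eqref{diffL}.

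The only delicate point is the interchange of $\partial_u$ with the double integral, which requires checking uniform integrability of the derivative against $\lambda\otimes dr$ near $r=0$; this is straightforward because the factor $\beta(\w)r u^{-q}+\alpha$ only mildly alters the singularity, and for $\alpha\in(0,1)$ the radial integral $\int_0^\varepsilon r^{-\alpha}\,dr$ is finite so dominated convergence applies on a neighbourhood of any fixed $u>0$. Everything else is routine bookkeeping with exponents.
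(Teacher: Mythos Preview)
Your proof is correct and follows essentially the same strategy as the paper. The paper carries out the same change of variable $r\mapsto t^q r$ but at the level of the characteristic function $\hat\mu_t(\zz)=\hat\mu(t^q\zz)$ rather than via the pushforward $\nu(E,t)=\nu(t^{-q}E)$, and then obtains \eqref{diffL} by writing $\nu_{\w}(r,t)=\int_0^t\partial_u\nu_{\w}(r,u)\,du$ and invoking Fubini--Tonelli to swap the $u$-integral outside; your dominated-convergence justification for differentiating under the integral is the equivalent step.
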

\begin{proof}

The time $t$ characteristic function of an $\ets$-Sato subordinator with zero drift is given by \eqref{Chscaled}, therefore

\begin{equation*}
\begin{split}
\hat{\mu}_t( \zz)&=\hat{\mu}(t^q\zz)=exp\{\int_{S^{d-1}}\int_{\RR_+}(e^{ir\langle \w, t^q\zz\rangle}-1)\frac{e^{-\beta( \w)r}}{r^{\alpha+1}}dr\lambda(d \w)\}\\
&=exp\{\int_{S^{d-1}}\int_{\RR_+}(e^{irt^q\langle \w, \zz\rangle}-1)\frac{e^{-\beta( \w)r}}{r^{\alpha+1}}dr\lambda(d \w)\}\\
&=exp\{\int_{S^{d-1}}\int_{\RR_+}(e^{iu\langle \w, \zz\rangle}-1)\frac{e^{-\beta( \w)ut^{-q}}}{u^{\alpha+1}t^{-\alpha q}}du\lambda(d \w)\}, \,\,\, \forall  \zz \in \RR^d.
\end{split}
\end{equation*}
and the time $t$ L\'evy measure is \eqref{SatoLM}.
Let now
\begin{equation*}\label{diffL2}
\begin{split}
g_{\w}(r, u)&=\frac{\partial \nu_{\w}(r, u)}{\partial u}=e^{-\beta( \w)ru^{-q}}qu^{\alpha q-1}\frac{\beta(\w)ru^{-q}+\alpha}{r^{\alpha+1}}.
\end{split}
\end{equation*}
We have

\begin{equation*}
\begin{split}
\hat{\mu}_t( \zz)&=exp\{\int_{S^{d-1}}\int_{\RR_+}(e^{ir\langle \w, t^q\zz\rangle}-1)\int_0^te^{-\beta( \w)ru^{-q}}qu^{\alpha q-1}\frac{\beta(\w)ru^{-q}+\alpha}{r^{\alpha+1}}dudr\lambda(d \w)\}
\end{split}
\end{equation*}
 Fubini-Tonelli applies and we have

\begin{equation*}
\begin{split}
\hat{\mu}_t( \zz)&=exp\{\int_0^t\int_{S^{d-1}}\int_{\RR_+}(e^{ir\langle \w, t^q\zz\rangle}-1)e^{-\beta( \w)ru^{-q}}qu^{\alpha q-1}\frac{\beta(\w)ru^{-q}+\alpha}{r^{\alpha+1}}dr\lambda(d \w)du\},
\end{split}
\end{equation*}
that with  \eqref{CHSbv} gives that
$
g_{\w}(r,u)
$ is the density of the radial component of the differential L\'evy measure, therefore \eqref{diffL} follows.

%
%
%
\end{proof}

\subsection{Examples}
We consider here the Sato subordinator associated to the multivariate gamma distribution  and to the multivariate  IG distribution.

\begin{enumerate}
\item {\bf Multivariate Sato-gamma.}
The multivariate Sato gamma subordinator is the Sato subordinator associated to the multivariate gamma distribution in \cite{perez2014infinitely} .
A Sato-Gamma subordinator is a Sato process $\SS(t)$ such that $\SS(1)\sim \ets(0, \beta, \lambda)$ and  $\lambda$ has support on $S^{d-1}_+$.

Notice that if $\lambda$ has support on $S^{d-1}$, we have a multivariate Sato-bilateral gamma or a multivariate Sato-variance gamma process. A process in this class has been used  by \cite{boen2019towards} to model asset prices.

\item{ \bf Multivariate Sato-inverse Gaussian.}
A Sato-inverse gamma (S-IG) subordinator is a Sato process $\SS(t)$ such that $\SS(1)\sim \ets(\frac{1}{2}, \beta, \lambda)$ and  $\lambda$ has support on $S^{d-1}_+$. We write $\SS(t)\sim$ S-IG$(\beta, \lambda)$.
The time $t$ L\'evy measure of a S-IG subordinator is
\begin{equation*}
\nu_{IG}(E, t)=\int_{S_+^{d-1}}\int_{\RR_+}\boldsymbol{1}_E(r \w)\frac{t^{\frac{q}{2}}e^{-\beta( \w)rt^{-q}}}{r^{q+1}}dr\lambda(d\w).
\end{equation*}
%
The S-IG subordinator is used in Section \ref{App} for our application to finance. Therefore we report its time $t$ characteristic function. Let $\SS(t)\sim$ S-IG$(\beta, \lambda)$, then its characteristic function is
\begin{equation*}
\hat{\mu}_t( \zz)=\hat{\mu}( t^q\zz)=exp\{-2\sqrt{\pi}\int_{S_+^{d-1}}[\sqrt{(b^2(\w)-i\langle\w, t^q\zz \rangle)}-b(\w)]\lambda(d \w)\}, \,\,\, \forall  \zz \in \RR^d,
\end{equation*}
where $b(\w)=\sqrt{\beta(\w)}$.

If $\lambda$ has support on $S^{d-1}$ we have  a multivariate Sato inverse Gaussian process.
\end{enumerate}

\section{Sato-$\ets$ subordinated Brownian motion}\label{Subs}
In this section we build a multivariate inhomogeneous additive process by subordinating a multiparameter Brownian motion with a multivariate $\ets$ Sato subordinator. For the formal definition of multiparameter  (L\'evy)  process we refer to \cite{Ba}. The Sato subordinator is assumed to have zero drift, i.e. $\gamma(0)=0$ in \eqref{CHSbv}. This assumption allows us to avoid the introduction of regularized Sato $\ets$ subordinators (see \cite{li2016additive}).

Let ${\boldsymbol{B}}_{i}(t)$ be independent Brownian motions on
$\mathbb{R}^{n_{i}}$ with drift $\boldsymbol{\mu}$ and covariance matrix
$\boldsymbol{\Sigma_{i}}$, and let $\boldsymbol{B}=\{{\boldsymbol{B}}(\boldsymbol{s}), \boldsymbol{s}%
\in\mathbb{R}^{d}_{+}\}$, where $\boldsymbol{B}(\boldsymbol{s}) := (\boldsymbol{B}_{1}(s_{1}), \ldots ,\boldsymbol{B}_{d}(s_{d}) )^{T}$,  be the associated multiparameter L\'{e}vy process.
Let $\boldsymbol{A}_{i}\in\mathcal{M}_{n\times{n_{i}}}(\mathbb{R})$. We can define the
process $\boldsymbol{B}_{\boldsymbol{A}}=\{\boldsymbol{B}_{\boldsymbol{A}}(\boldsymbol{s}),\boldsymbol{s}\in\mathbb{R}^{d}_{+}\}$ as%
\begin{equation}
\label{Brho1}{\boldsymbol{B}}_{\boldsymbol{A}}(\boldsymbol{s})=\boldsymbol{A}_{1}{{\boldsymbol{B}_{1}}%
}({s}_{1})+\ldots+\boldsymbol{A}_{d}{{\boldsymbol{B}_{d}}}({s}_{d}) \,\,\, \boldsymbol{s}%
\in\mathbb{R}^{d}_{+}.
\end{equation}
The process $\boldsymbol{B}_{\boldsymbol{A}}$ is a multiparameter  L\'{e}vy process on $\mathbb{R}^{n}$, see Example 4.4 in
\cite{Ba}. We call the multiparameter
 L\'{e}vy process $\boldsymbol{B}_{\boldsymbol{A}}(\boldsymbol{s})$ in
\eqref{Brho1} {\it multiparameter Brownian motion}.

\begin{definition}\label{SubBM}
\label{process} A process $\boldsymbol{Y}$ defined by
\begin{align} \label{GMPP}
\boldsymbol{Y}(t):=\boldsymbol{B}_{\boldsymbol{A}}(\boldsymbol{\SS}(t)),%
\end{align}
where $\BB_{\AA}(\ss)$ is the multiparameter process in \eqref{Brho1} and $\SS(t)$ is a multivariate Sato subordinator independent of $\BB_{\boldsymbol{A}}(\ss)$,
is a Sato subordinated
multiparameter Brownian motion.
\end{definition}
We now provide the characteristic function of $\YY(t)$ in \eqref{GMPP}.
\begin{theorem}\label{SubCh}
The Sato subordinated Brownian motion $\YY(t)$  in \eqref{Brho1}  is an additive pure jump process with time $t$ characteristic function:
\begin{equation}\label{ppbv2}
\hat{\mu}_t( \zz)=exp\{\Gamma(-\alpha)\int_{S^{d-1}}[(\beta(\w)-t^q\langle \log(\hat{\phi}_{\AA}(\zz)), \w\rangle)^{\alpha}-\beta(\w)^{\alpha}]\lambda(d \w)\}, \,\,\, \forall  \zz \in \RR^d,
\end{equation}
where $\log(\hat{\phi}_{\AA}(\zz))=(\log\hat{\phi}_1 (\boldsymbol{z}%
),\ldots,\log\hat{\phi}_d (\boldsymbol{z}))$, $\hat{\phi}_l (\boldsymbol{z}%
)$ is the characteristic function of
$\boldsymbol{A}_{l}\boldsymbol{B}_{l}(1)$, $l=1,\ldots, d$.
\end{theorem}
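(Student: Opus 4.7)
My approach is to condition on the subordinator and reduce the whole computation to an exponential moment of $\SS(t)$, which is an analytic continuation of the characteristic function already obtained via the Sato scaling \eqref{Chscaled} and Proposition~\ref{chalpha}.

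First I would compute the conditional characteristic function of the multiparameter Brownian motion at a deterministic $\ss\in\RR^d_+$. Since each $\BB_l$ is L\'evy with unit-time characteristic function $\hat{\phi}_l$, the variable $\AA_l\BB_l(s_l)$ has characteristic function $\hat{\phi}_l(\zz)^{s_l}$, and independence of the $\BB_l$ gives
\[
E\bigl[e^{i\langle\zz,\BB_{\AA}(\ss)\rangle}\bigr]=\prod_{l=1}^d\hat{\phi}_l(\zz)^{s_l}=\exp\bigl(\langle\ss,\log\hat{\phi}_{\AA}(\zz)\rangle\bigr).
\]
Conditioning on $\SS(t)$ and invoking the independence postulated in Definition~\ref{SubBM},
\[
\hat{\mu}_t^{\YY}(\zz)=E\bigl[\exp\bigl(\langle\SS(t),\log\hat{\phi}_{\AA}(\zz)\rangle\bigr)\bigr].
\]

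Next I would recognise the right-hand side as the Laplace transform of $\SS(t)$ evaluated at $\uu:=\log\hat{\phi}_{\AA}(\zz)$. Each component of $\uu$ has non-positive real part (since $|\hat{\phi}_l(\zz)|\le 1$) and $\SS(t)$ takes values in $\RR^d_+$, so the expectation is well-defined and equals $\hat{\mu}_t^{\SS}(-i\uu)$ by the standard analytic extension of the characteristic function of a positive infinitely-divisible variable. By Proposition~\ref{condSubo}, $\alpha\in(0,1)$ and $\operatorname{supp}\lambda\subset S^{d-1}_+$, so Proposition~\ref{chalpha} combined with \eqref{Chscaled} yields
\[
\hat{\mu}_t^{\SS}(\zz)=\exp\!\Big\{\Gamma(-\alpha)\!\int_{S^{d-1}_+}\!\bigl[(\beta(\w)-it^q\langle\w,\zz\rangle)^{\alpha}-\beta(\w)^{\alpha}\bigr]\lambda(d\w)\Big\}.
\]
Substituting $\zz=-i\uu$ and using $i\cdot(-i)=1$ turns $it^q\langle\w,\zz\rangle$ into $t^q\langle\w,\uu\rangle$; re-inserting $\uu=\log\hat{\phi}_{\AA}(\zz)$ produces exactly \eqref{ppbv2}. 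The additivity of $\YY$ follows from $\YY(t_2)-\YY(t_1)=\sum_l\AA_l\bigl(\BB_l(S_l(t_2))-\BB_l(S_l(t_1))\bigr)$, whose law depends only on $\SS(t_2)-\SS(t_1)$, which is independent of $\mathcal{F}^{\YY}_{t_1}$ because both $\SS$ and $\BB_{\AA}$ have independent increments; the pure-jump property is read off \eqref{ppbv2}, since the characteristic exponent contains no quadratic form in $\zz$ and the drift assumption $\gamma(0)=0$ rules out any Gaussian component re-introduced by a continuous part of $\SS$.

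The step I expect to need the most care is the analytic continuation used to identify $E[\exp\langle\uu,\SS(t)\rangle]$ with $\hat{\mu}_t^{\SS}(-i\uu)$: one must verify that the principal branch of $z\mapsto z^{\alpha}$ can be applied consistently to $\beta(\w)-it^q\langle\w,\zz\rangle$ throughout the tube $\{\operatorname{Im}\zz\le 0\}$, so that the argument stays in the right complex half-plane and Proposition~\ref{chalpha} extends holomorphically, and that dominated convergence justifies the interchange of expectation and integration over the spherical component. This is classical for subordinators but deserves an explicit sentence.
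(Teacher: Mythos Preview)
Your proposal is correct and follows essentially the same route as the paper: condition on $\SS(t)$, obtain $E[e^{i\langle\zz,\BB_{\AA}(\ss)\rangle}]=\exp\langle\ss,\log\hat{\phi}_{\AA}(\zz)\rangle$, and then evaluate the resulting Laplace transform of $\SS(t)$ via the Sato scaling \eqref{Chscaled} and Proposition~\ref{chalpha}. The only cosmetic difference is that the paper appeals to Theorem~4.7 in \cite{Ba} for the conditioning step, whereas you derive it directly from the independence of the $\BB_l$; your extra remarks on the analytic continuation and on additivity/pure-jump are welcome details that the paper leaves implicit.
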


\begin{proof}
Let $\boldsymbol{A}_i\in \mathcal{M}_{n\times{n_i}}(\RR)$ and let the process
$\BB_{\boldsymbol{A}}$ be defined as in \eqref{Brho1}.
The process $\tilde{\BB}(s_l)= \boldsymbol{A}_l\BB_l(s_l)$ is a $n$-dimensional Brownian motion with parameters $\mmu_{\boldsymbol{A}}=\boldsymbol{A}_l  \mmu_l$ and $\boldsymbol{\Sigma}_l=\boldsymbol{A}_l\boldsymbol{\Sigma}_l \boldsymbol{A}_l^T$.
We have
\begin{equation*}
\BB_{\boldsymbol{A}}(\dd_j)=\BB_{\boldsymbol{A}} (0,\ldots,\underbracket{1}_\text{j-th},\ldots,0)=\boldsymbol{A}_j\BB_j(1).
\end{equation*}
Thus
\begin{align*}
\hat{\phi}_j (\boldsymbol{z}) &=\mathbb{E}[\exp\{i\langle\boldsymbol{A}_j\BB_j(1),\boldsymbol{z}\rangle\}]= \mathbb{E}[\exp\{i\langle\BB_{\boldsymbol{A}}(\dd_j),\boldsymbol{z}\rangle\}]\\
\end{align*}
and
\begin{equation}\label{subSch}
\begin{split}
\hat{\mu}_t(\zz)&=E[\exp\{i\langle \YY(t), \zz\rangle]
=E[E[\exp\{i\langle \BB_{\AA}(\ss), \zz\rangle\}|\SS(t)=\ss]]\\
&=E[\exp\{\langle \log(\hat{\phi}_{\AA}(\zz)), \SS(t)\rangle\}]=\exp\{\psi_{\SS}(t^q \log(\hat{\phi}_{\AA}(\zz))\},\\
\end{split}
\end{equation}
where the second equality follows from Theorem 4.7 in \cite{Ba} and last equality follows because $\SS(t)$ is a Sato process.
Equation \eqref{subSch} with \eqref{ppbv1}, gives \eqref{ppbv2}.
\end{proof}

The unit time distribution of $\YY(t)$ in \eqref{GMPP} is a multivariate Gaussian mixture. The mixing distribution is the unit time distribution of the Sato subordinator $\SS(t)$.
%
%
%
\begin{remark}
Subordination of multiparameter processes has been introduced in \cite{Ba}, where the authors consider the case of a L\'evy subordinator. In \cite{jevtic2019multivariate}, the authors consider the case where the multiparameter process is the multiparameter Brownian motion in \ref{Brho1}.
The Sato subordinated Brownian motion in \eqref{GMPP} has the same unit time distribution of a subordinated multiparameter Brownian motion, as one can see from the unit time characteristic function.
\end{remark}

\section{Application to asset returns modeling}\label{App}

This section proposes an inhomogeneous multivariate additive  model for asset returns based on Sato subordination.
We specify the multivariate Sato subordinator to have a simple but flexible dependence structure.
We build on a dependence structure economically sound that has been proposed  in the L\'evy framework.

\cite{Sem1}  introduced
 factor-based subordinators, i.e. subordinators with a component common to all
assets and an idiosyncratic component. Factor-based subordinators are used to build asset return models, see e.g.  \cite{buchmann2015multivariate} and \cite{guillaume2013alphavg}.  By properly choosing the components, the unit time distribution of factor-based subordinators belongs to the class of multivariate $\ets $ distributions.

\subsection{Sato subordinator}
We start with the introduction of  factor-based $\ets$ distributions.
\begin{proposition}\label{factor}
Let  $\alpha\in (0,1)$. Let us consider a random vector $\SS=(S_1,\ldots, S_n)\sim \mu$, such that
\begin{equation}\label{onefact}
S_j=X_j+a_{j}Z, \,\,\, j=1,\ldots, d,
\end{equation}
where  $Z\sim\ets(\alpha, \beta_Z, \lambda_Z)$ and $X_i\sim\ets(\alpha, \beta_i, \lambda_i)$ are independent.
Then $\SS\sim\ets(\alpha, \beta, \lambda)$, where $\lambda $ has support
 $\text{Supp}(\lambda)=\{\w, \ee_j, j=1,\ldots, d\}$,  $\{\ee_j, j=1,\ldots, d\}$ is the canonical $\RR^d$ basis,  $\w=\frac{\aa}{||\aa||}$ and $\beta:\text{Supp}(\lambda)\rightarrow\RR$, is defined by $\beta(\ee_i)=\beta_i$ and $\beta(\w)=\frac{\beta_Z}{||\aa||}$.
\end{proposition}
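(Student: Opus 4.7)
The plan is to exploit the additivity of L\'evy measures under convolution. Writing $\SS=\boldsymbol{X}+\aa Z$ with $\boldsymbol{X}:=(X_1,\ldots,X_d)$ and $\aa Z$ independent, each summand is infinitely divisible, so $\SS$ is infinitely divisible with $\nu_{\SS}=\nu_{\boldsymbol{X}}+\nu_{\aa Z}$. The proof then reduces to computing each summand in polar form and reading off a common spherical measure and tempering function.

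First I would treat $\boldsymbol{X}$. Since the $X_j$ are independent one-dimensional $\ets(\alpha,\beta_j,\lambda_j)$ variables, Proposition \ref{sumind} places the L\'evy measure of $\boldsymbol{X}$ on the coordinate axes, with radial component on axis $j$ equal to the L\'evy measure of $X_j$:
$$\nu_{\boldsymbol{X}}(E)=\sum_{j=1}^{d}\int_{\RR_+}\boldsymbol{1}_E(r\ee_j)\frac{e^{-\beta_j r}}{r^{\alpha+1}}\lambda_j\,dr.$$
So the spherical measure is $\sum_j\lambda_j\delta_{\ee_j}$ and the tempering function on this support is $\beta(\ee_j)=\beta_j$, matching the statement.

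Next I would handle $\aa Z$. Its L\'evy measure is the push-forward of $\nu_Z(dr)=\frac{e^{-\beta_Z r}}{r^{\alpha+1}}\lambda_Z\,dr$ under the map $z\mapsto \aa z$, hence is concentrated on the half-line $\{s\w: s>0\}$ with $\w=\aa/||\aa||$. The substitution $s=||\aa||r$ gives
$$\nu_{\aa Z}(E)=\int_{\RR_+}\boldsymbol{1}_E(s\w)\,||\aa||^{\alpha}\lambda_Z\,\frac{e^{-(\beta_Z/||\aa||)s}}{s^{\alpha+1}}\,ds,$$
so the spherical component contributes a mass $||\aa||^{\alpha}\lambda_Z$ at $\w$ with tempering $\beta(\w)=\beta_Z/||\aa||$. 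Summing $\nu_{\boldsymbol{X}}$ and $\nu_{\aa Z}$ produces the $\ets$ L\'evy measure in \eqref{nuTstable} with the stated $\lambda$ and $\beta$.

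The main obstacle is essentially bookkeeping: one just has to carry out the change of variables for $\aa Z$ (the one-dimensional case of Proposition \ref{closure}(2) reinterpreted as a vector scaling along $\w$) and verify that the spherical measures on the disjoint support points $\{\ee_1,\ldots,\ee_d\}$ and $\{\w\}$ combine additively. Since $\w$ is generically distinct from the $\ee_j$, no collision in the support arises and $\nu_{\SS}$ is precisely of the form \eqref{nuTstable} with support equal to $\{\w,\ee_1,\ldots,\ee_d\}$.
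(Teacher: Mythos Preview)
Your proposal is correct and follows essentially the same route as the paper: decompose $\SS=\XX+\aa Z$, identify the L\'evy measure of $\XX$ via Proposition \ref{sumind} as supported on the axes, identify that of $\aa Z$ as supported on the ray through $\w=\aa/||\aa||$, and add. The only cosmetic difference is that the paper invokes Proposition \ref{closure} and an external lemma (Lemma 2.1 in \cite{semeraro2020note}) for the $\aa Z$ step, whereas you carry out the push-forward and change of variables $s=||\aa||r$ explicitly, which has the advantage of being self-contained and of displaying the spherical mass $||\aa||^{\alpha}\lambda_Z$ at $\w$ directly.
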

\begin{proof}
Let $\beta: Supp(\lambda)\rightarrow \RR_+$, such that $\beta(\w)=\beta_j$ if $\w=\ee_j$ and $\beta(\w)=\frac{\beta_Z}{||\aa||}$ .
The vector
 $\aa Z:=(a_1Z,\dots,a_dZ)^T$ has $\ets$  distribution with parameters $\alpha$, $\beta$ and $\lambda_Z$, where $\lambda_Z$ is a finite measure  with support on the point $\w=\frac{\aa}{||\aa||}$, see Lemma 2.1 in \cite{semeraro2020note}.

 Since $\XX=(X_1, \ldots, X_d)$ has independent components, by Proposition \ref{sumind}, its L\'evy measure has support on
$\{\ee_j, j=1,\ldots, d\}$. Thus
\begin{equation*}
\begin{split}
\nu_{\XX}(E)&=\sum_{j=1}^n\nu_{\XX}(E_{j})=\sum_{i=1}^d\boldsymbol{1}_{E_i}(r \ee_i)\frac{e^{-\beta_ir}}{r^{\alpha+1}}\lambda_i=\sum_{i=1}^d\boldsymbol{1}_{E_i}(r \ee_i)\frac{e^{-\beta({\ee_i})r}}{r^{\alpha+1}}\lambda_{\XX}(\{\ee_i\}).
\end{split}
\end{equation*}
where  $E\in \mathcal{B}(\mathbb{R}^n\setminus \{0\})$,
$E_{j}=E\cap A_j$ and $A_j=\{x\in \mathbb{R}^n: x_k=0, k\neq j,
k=1,...,n\}$.  Thus $\XX\sim \ets(\alpha, \beta, \lambda_{\XX})$ with $\lambda_{\XX}(\{\w\})=\lambda_i$ if $\w=\ee_i$ and $\lambda_{\XX}(\{\w\})=0$ otherwise.

 Since $\aa Z$ and $\XX$ are independent, by Proposition \ref{closure} $\XX+\aa Z\sim \ets(\alpha, \beta, \lambda)$, where $\lambda=\lambda_X+\lambda_Z$ and thus it has support on $\{\w; \ee_j, j=1,\ldots, d\}$.
\end{proof}
We say that $\SS$ in Proposition \ref{factor} has a factor-based  distribution $\mu\sim\ets(\alpha, \beta, \lambda)$.
\begin{corollary}

A factor-based
measure $\mu\sim\ets(\alpha, \beta, \lambda)$   has characteristic function the form
\begin{equation}\label{fbs}
\begin{split}
\hat{\mu}( \zz)=&\prod_{j=1}^d\exp\{\Gamma(-\alpha)[(\beta_j-iz_j)^{\alpha}-\beta_j^{\alpha}]\lambda_j\}\cdot\\
&\cdot\exp\{\Gamma(-\alpha)[(\beta_Z-i(\sum_{k=1}^d a_kz_k))^{\alpha}-\beta_Z^{\alpha}]\lambda_Z\}, \,\,\, \forall  \zz \in \RR^d,\\
\end{split}
\end{equation}
where $\beta_j, \beta_Z\in \RR_+$ and  $\lambda_j, \lambda_Z\in \RR_+$.
\end{corollary}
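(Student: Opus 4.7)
The plan is to use the factor-based representation $\SS = \XX + \aa Z$ from Proposition \ref{factor} together with the independence of $\XX$ and $Z$, so that the characteristic function factorizes as $\hat{\mu}(\zz) = \hat{\mu}_{\XX}(\zz)\,\hat{\mu}_{\aa Z}(\zz)$. Each factor is then computed by invoking Proposition \ref{chalpha} (specifically \eqref{ppbv1}, which applies since $\alpha\in(0,1)$).

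First I would handle the independent-component piece $\XX$. By Proposition \ref{sumind}, $\XX\sim\ets(\alpha,\beta,\lambda_{\XX})$ with $\lambda_{\XX}$ supported on the canonical basis $\{\ee_j\}_{j=1}^d$, with weights $\lambda_{\XX}(\{\ee_j\})=\lambda_j$ and $\beta(\ee_j)=\beta_j$. Plugging this discrete $\lambda_{\XX}$ into \eqref{ppbv1}, the spherical integral collapses to a finite sum over $j$, and since $\langle \ee_j,\zz\rangle = z_j$, one obtains
\[
\hat{\mu}_{\XX}(\zz)=\prod_{j=1}^d\exp\{\Gamma(-\alpha)[(\beta_j-iz_j)^{\alpha}-\beta_j^{\alpha}]\lambda_j\}.
\]

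Next I would handle $\aa Z$. Since $Z$ is one-dimensional, writing out its characteristic function via \eqref{ppbv1} on $S^0$ (with support restricted to $\{1\}$ because $Z\geq 0$) gives $\hat{\mu}_Z(t)=\exp\{\Gamma(-\alpha)[(\beta_Z-it)^{\alpha}-\beta_Z^{\alpha}]\lambda_Z\}$. Then a one-line computation yields
\[
\hat{\mu}_{\aa Z}(\zz)=\mathbb{E}\bigl[e^{iZ\sum_{k=1}^d a_k z_k}\bigr]=\hat{\mu}_Z\!\Bigl(\sum_{k=1}^d a_k z_k\Bigr),
\]
which is exactly the second exponential in \eqref{fbs}. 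Multiplying the two expressions by independence gives the claim.

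There is no real obstacle here; this is essentially a bookkeeping exercise combining Propositions \ref{factor}, \ref{sumind}, \ref{closure}, and \ref{chalpha}. The only mild care needed is to check that the spherical measure produced by Proposition \ref{factor} — which is atomic, supported on $\{\w,\ee_1,\dots,\ee_d\}$ with the stated values of $\beta$ at each atom — is the measure fed into \eqref{ppbv1}, so that the integral over $S^{d-1}$ genuinely reduces to a finite sum of $d+1$ terms matching the product structure in \eqref{fbs}. Once this bookkeeping is verified, the formula follows directly.
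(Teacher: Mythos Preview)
Your proposal is correct and follows essentially the same route as the paper: reduce the spherical integral in \eqref{ppbv1} to a finite sum over the atoms $\{\ee_1,\dots,\ee_d,\w\}$ of the measure $\lambda$ described in Proposition \ref{factor}. The only cosmetic difference is that the paper applies \eqref{ppbv1} once to the combined atomic $\lambda$ and then reads off the $d+1$ terms, whereas you first split via independence into $\hat{\mu}_{\XX}\cdot\hat{\mu}_{\aa Z}$ and compute the $\aa Z$ factor from the one-dimensional characteristic function of $Z$; both computations yield the same second exponential in \eqref{fbs}.
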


\begin{proof}
It is sufficient to notice that if $\lambda$ has finite support $\{\w_j, \, j=1,\ldots d\}$, then its characteristic function becomes
\begin{equation*}\label{ppf}
\begin{split}
\hat{\mu}( \zz)
&=\prod_{j=1}^d\exp\{\Gamma(-\alpha)\lambda_j[(\beta_j-i\langle \w_j, \zz \rangle)^{\alpha}-\beta_j^{\alpha}]\}, \,\,\, \forall  \zz \in \RR^d,
\end{split}
\end{equation*}
where $\beta(\w_j)=\beta_j$ and $\lambda(\{\w_j\})=\lambda_j$.
The assert follows by choosing $\lambda$ and $\beta$ as in Proposition \ref{factor}.
\end{proof}
Notice that, if  $\lambda$ has finite support,
the  one-dimensional  marginal distributions $\mu_j$ of  $\mu\sim\ets(\alpha, \beta, \lambda)$  are convolutions of one-dimensional $\ets$ distributions. This means that, by properly choosing the distributions in the  $\ets$ class and their parameters, we can use closure properties of convolution and obtain specified one-dimensional marginal distributions.



Factor-based subordinators with   distributions at unit time that belong to the family of tempered stable distributions  are widely used to introduce dependence in multivariate models in finance.   See, for example, \cite{Sem1},  \cite{LuciSem1} and \cite{buchmann2019weak}. Here we propose a similar construction, but we replace a L\'evy subordinator with a Sato subordinator, to include inhomogeneity of increments.
\begin{definition}\label{Satosub}
A factor-based Sato subordinator $\SS(t)$ is a Sato process such that $\SS(1)$ has the factor-based $\ets$ distribution in Proposition \ref{factor}.
\end{definition}
If $\SS(1)\sim \ets(\alpha, \beta, \lambda)$, the time $t$ characteristic function of the Sato subordinator $\SS(t)$ is
\begin{equation*}
\begin{split}
\hat{\mu}_{t}( \zz)=\hat{\mu}(t^q\zz),
\end{split}
\end{equation*}
where $\hat{\mu}$ is the characteristic function in \eqref{fbs}.   If $\SS(t)$ is the  Sato subordinator associated to $\SS$ we write  $\SS(t)\sim \ets(\alpha, \beta, \lambda, q)$.

\subsection{Factor-based Sato subordinated Brownian motion}

We now introduce two Sato subordinated Brownian motions, with $\ets$  Sato subordinators.
The two models proposed here have the same dependence structure of  the factor-Based subordinated Brownian motions in \cite{LuciSem1}. Actually, the processes we consider have the same unit time distribution of the  factor-based subordinated Brownian motions in \cite{LuciSem1}.
Our aim is to keep the flexibility of their dependence structure and  to have one-dimensional  unit time distributions  in given classes.

We can properly specify the unit time distribution to have one-dimensional margins belonging to classes well suited to model single asset returns. At the same time, we include time inhomogeneity of increments and  time varying correlations.

\begin{definition}\label{dipfb}
 Let $B_j(t), j=1,\ldots, d$ be independent Brownian motions with drift $\mu_j$ and diffusion $\sigma_j$.
Let $\boldsymbol{B}^{\rho}(t)$ be a correlated
$d$-dimensional Brownian motion, with correlations $\rho_{ij}$, marginal
drifts $\boldsymbol{\mu}^{\rho}_j=\mu_{j}\alpha_{j}$ and
diffusion matrix $\Sigma^{\rho}:=( \rho_{ij}\sigma_{i}\sigma_{j}\sqrt
{{\alpha_{i}}}\sqrt{{\alpha_{j}}})_{ij}$, $i,j=1,\ldots d$.
The $\mathbb{R}^{d}$-valued subordinated process $\boldsymbol{Y}%
=\{\boldsymbol{Y}^{\rho}(t),t>0\}$ defined by%

\begin{equation}
\boldsymbol{Y}^{\rho}(t)=\left(
\begin{array}
[c]{c}%
B_{1}(X_{1}(t))+B_{1}^{\rho}(Z(t))\\
....\\
B_{d}(X_{d}(t))+B_{d}^{\rho}(Z(t))
\end{array}
\right)  , \label{abgp}%
\end{equation}
where $X_{j}(t)$ and $Z(t)$ are the independent Sato subordinators with unit time distribution of $X_j$ and $Z$  in Proposition \ref{factor}, independent from
$\boldsymbol{B}(t)$ and $\boldsymbol{B}^{\rho}(t)$ is a $\rho$-factor-based Sato subordinated
Brownian motion.
\end{definition}
The following proposition can be proved as Proposition 3.1  in \cite{jevtic2019multivariate}.
\begin{proposition}
Let $\boldsymbol{Y}^{\rho}(t)$ be a  $\rho$-factor-based Sato subordinated
Brownian motion in \eqref{abgp}. Then $\boldsymbol{Y}^{\rho}(t)$
belong to the class of Sato subordinated Brownian motions in Definition \ref{SubBM}.
\end{proposition}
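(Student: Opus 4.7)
The plan is to realize $\boldsymbol{Y}^{\rho}(t)$ explicitly in the canonical form $\boldsymbol{B}_{\boldsymbol{A}}(\boldsymbol{S}(t))$ of Definition \ref{SubBM}, working in an enlarged multiparameter time space $\RR^{d+1}_+$ (Definition \ref{SubBM} places no restriction on the number of time parameters, only that the Brownian ingredients be independent and that the subordinator be multivariate Sato of the same dimension).

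For the multiparameter Brownian motion, I take independent building blocks $\boldsymbol{B}_j := B_j$ on $\RR$ for $j=1,\ldots,d$ and $\boldsymbol{B}_{d+1} := \boldsymbol{B}^{\rho}$ on $\RR^d$, with aggregation matrices $\boldsymbol{A}_j = \ee_j \in \mathcal{M}_{d \times 1}(\RR)$ for $j \le d$ and $\boldsymbol{A}_{d+1} = \II_d$. Then \eqref{Brho1} specializes to
\begin{equation*}
\boldsymbol{B}_{\boldsymbol{A}}(\boldsymbol{s}) = (B_1(s_1),\ldots,B_d(s_d))^T + \boldsymbol{B}^{\rho}(s_{d+1}), \qquad \boldsymbol{s}\in\RR^{d+1}_+.
\end{equation*}

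For the subordinator, set $\boldsymbol{S}(t) := (X_1(t),\ldots,X_d(t),Z(t))^T$. Since $X_1,\ldots,X_d,Z$ are independent one-dimensional $\ets$ Sato subordinators sharing a common self-similarity index $q$, the vector $\boldsymbol{S}$ has positive and increasing one-dimensional components, independent increments, and is self-similar of order $q$; its unit-time distribution, being the product of independent one-dimensional $\ets$ laws, is itself an $\ets$ distribution on $\RR^{d+1}$ with spherical measure $\lambda$ supported on $\{\ee_j\}_{j=1}^{d+1}$ (the coordinate-axis case of Proposition \ref{sumind}). Hence $\boldsymbol{S}$ is a multivariate $\ets$ Sato subordinator in the sense of Section \ref{Msub}, and it is independent of $\boldsymbol{B}_{\boldsymbol{A}}$ by hypothesis.

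Direct substitution then yields
\begin{equation*}
\boldsymbol{B}_{\boldsymbol{A}}(\boldsymbol{S}(t)) = (B_1(X_1(t)),\ldots,B_d(X_d(t)))^T + \boldsymbol{B}^{\rho}(Z(t)) = \boldsymbol{Y}^{\rho}(t),
\end{equation*}
placing $\boldsymbol{Y}^{\rho}$ inside the class of Definition \ref{SubBM}. The only non-routine step is the recognition that $\boldsymbol{S}$ is a bona fide multivariate Sato subordinator and not merely a vector of independent one-dimensional Sato processes; this hinges on the common scaling index $q$ across $X_1,\ldots,X_d,Z$, a compatibility condition implicit in Definition \ref{Satosub} and in the construction \eqref{abgp}. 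Once this is granted, matching \eqref{abgp} to \eqref{Brho1} is purely formal.
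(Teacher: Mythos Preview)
Your proof is correct and follows essentially the same route as the paper: both realize $\boldsymbol{Y}^{\rho}$ as $\boldsymbol{B}_{\boldsymbol{A}}(\boldsymbol{S}(t))$ with the $(d+1)$-parameter Brownian motion built from $B_1,\ldots,B_d,\boldsymbol{B}^{\rho}$ via $\boldsymbol{A}_j=\ee_j$ and $\boldsymbol{A}_{d+1}=\boldsymbol{I}_d$, and with $\boldsymbol{S}(t)=(X_1(t),\ldots,X_d(t),Z(t))$. Your version is in fact slightly more careful, since you explicitly justify that $\boldsymbol{S}$ is a genuine multivariate $\ets$ Sato subordinator (via Proposition~\ref{sumind} and the common exponent $q$), whereas the paper simply asserts this.
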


\begin{proof}
Let us consider the  independent  Brownian motions $B_i(t), \, i=1,\ldots d$ and $\BB^\rho(t)$ \ref{abgp} and let
\begin{equation*}
\BB_{\boldsymbol{A}}(\ss)=\sum_{i=1}^dA_iB_i(s_i)+\AA_{d+1}\BB^{\rho}(s_{d+1}),
\end{equation*}
where $A_i\in \mathcal{M}(d\times 1)$, $i=1,\ldots, d$ and $\boldsymbol{A}\in \mathcal{M}_{d\times (d)}$ such that
$A_i=(0,\ldots,1,\ldots, 0)$, $i_1,\ldots, d$
and $\AA_{d+1}=\boldsymbol{I}_d$. Let now $\SS(t)=(X_1(t),\ldots, X_d(t), Z(t))$ be and $\ets$ Sato subordinator with independent components. Let $\YY^{\rho}(t)$ be the process in \eqref{abgp}
we have  $\YY(t)^{\rho}=\BB_{\AA}(\SS(t))$.
\end{proof}

\begin{corollary}\label{chfb}
If $\SS(t)\sim \ets(\alpha, \beta, \lambda, q)$ is the Sato subordinator in \eqref{onefact} the subordinated process $\YY(t)$ in \eqref{abgp} has characteristic function
\begin{equation}\label{BMCh}
\begin{split}
\hat{\mu}_t( \zz)=&\prod_{j=1}^d \exp\{-2\sqrt{\pi}[(\beta_j-  t^q (i\mu_jz_j-\frac{1}{2}\sigma^2z_j^2 )^{\alpha}-\beta_j^{\alpha}]\lambda_j\}\cdot\\
&\exp\{-2\sqrt{\pi}[(\beta_Z-  t^q (i\boldsymbol{u}^{T}\boldsymbol{\mu}^{\rho}%
-\frac{1}{2}\boldsymbol{z}^{T}\boldsymbol{\Sigma}^{\rho}\boldsymbol{z})^{\alpha}-\beta_Z^{\alpha}]\lambda_Z\}, \,\,\, \forall  \zz \in \RR^d,
\end{split}
\end{equation}
where $\beta_j, \beta_Z, \lambda_j$ and $\lambda_Z$ are the parameters in Proposition \ref{factor}.
\end{corollary}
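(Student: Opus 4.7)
The plan is to exploit two structural facts: the mutual independence of the factor-based Sato subordinators from Proposition \ref{factor} and Definition \ref{Satosub}, and the conditional Gaussianity of $\YY^\rho(t)$ given $\SS(t)$. By Definition \ref{Satosub} the subordinators $X_1(t),\ldots,X_d(t),Z(t)$ are mutually independent Sato processes, and they are independent of the driving Brownian motions $B_j$ and $\BB^\rho$. Consequently the $d+1$ vectors $B_j(X_j(t))\ee_j$ for $j=1,\ldots,d$ and $\BB^\rho(Z(t))$ are mutually independent, so the characteristic function factors as
\[
\hat\mu_t(\zz)\;=\;\prod_{j=1}^d E\bigl[e^{iz_j B_j(X_j(t))}\bigr]\cdot E\bigl[e^{i\langle \zz,\BB^\rho(Z(t))\rangle}\bigr].
\]

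For each idiosyncratic factor I would condition on $X_j(t)$ and use the standard subordination identity. The one-dimensional Brownian motion $B_j$ with drift $\mu_j$ and diffusion $\sigma_j$ has characteristic exponent $\psi_j(z_j)=i\mu_j z_j-\tfrac12\sigma_j^2 z_j^2$, so $E[e^{iz_jB_j(X_j(t))}]=E[\exp\{X_j(t)\psi_j(z_j)\}]$. Now $X_j(t)$ is a one-dimensional $\ets$ Sato process with $X_j(1)\sim\ets(\alpha,\beta_j,\lambda_j)$, and the one-dimensional specialisation of \eqref{ppbv1} together with \eqref{Chscaled} gives $\hat\mu_{X_j(t)}(\zeta)=\exp\{\Gamma(-\alpha)[(\beta_j-it^q\zeta)^\alpha-\beta_j^\alpha]\lambda_j\}$. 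Substituting $\zeta\mapsto -i\psi_j(z_j)$ and using $\Gamma(-\alpha)=-2\sqrt{\pi}$ at $\alpha=\tfrac12$ yields exactly the $j$th factor in \eqref{BMCh}.

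For the common factor I proceed analogously. Conditional on $Z(t)=s$, $\BB^\rho(s)$ is a $d$-dimensional Gaussian vector with characteristic exponent $\psi^\rho(\zz)=i\zz^T\mmu^\rho-\tfrac12\zz^T\SSigma^\rho\zz$, so $E[e^{i\langle\zz,\BB^\rho(Z(t))\rangle}]=E[\exp\{Z(t)\psi^\rho(\zz)\}]$. The chf of the Sato process $Z(t)$ with $Z(1)\sim\ets(\alpha,\beta_Z,\lambda_Z)$, evaluated at $\zeta=-i\psi^\rho(\zz)$, produces the second exponential in \eqref{BMCh}. Multiplying the two contributions delivers the claimed formula. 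The only genuine bookkeeping step — which I would view as the main (mild) obstacle — is justifying that the chf of the nonnegative Sato process extends analytically to the negative-real-part arguments $-\psi_j(z_j)$ and $-\psi^\rho(\zz)$; this is routine because $X_j(t),Z(t)\geq 0$ a.s.\ and $\mathrm{Re}(\psi_j),\mathrm{Re}(\psi^\rho)\leq 0$, so the integrand $e^{s\,\psi}$ is uniformly bounded and the substitution is legitimate. Alternatively, one can simply invoke Theorem \ref{SubCh} with the factor-based specification of $\SS(t)$ — the preceding proposition places $\YY^\rho(t)$ in the class of Sato subordinated multiparameter Brownian motions — and unpack the factorised form of $\log\hat\phi_{\AA}(\zz)$ to arrive at the same expression.
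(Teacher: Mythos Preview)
Your proposal is correct and matches the paper's intended route: the corollary is stated without proof because it is a direct specialisation of Theorem \ref{SubCh} to the factor-based subordinator, combined with the factor-based characteristic function \eqref{fbs}. Your direct computation via independence of $X_1(t),\ldots,X_d(t),Z(t)$ and conditioning on each subordinator is exactly the unpacking of that general formula, and you rightly note that one may equally just invoke Theorem \ref{SubCh} together with the preceding proposition. Your observation that the constant $-2\sqrt{\pi}$ arises from $\Gamma(-\alpha)$ at $\alpha=\tfrac12$ is also apt --- the displayed formula \eqref{BMCh} is written with the IG constant even though the statement nominally allows general $\alpha$, which is a cosmetic inconsistency in the paper rather than a gap in your argument.
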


\begin{corollary}\label{cor}
If the Brownian motions in Definition \ref{dipfb} have zero drift  the subordinated process $\YY^{\rho}(t)$ in \eqref{abgp}  is a Sato process.
\end{corollary}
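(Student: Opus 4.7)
\textbf{Proof plan for Corollary \ref{cor}.}
My plan is to exploit the closed-form characteristic function given in Corollary \ref{chfb} and verify directly the two defining properties of a Sato process: independence of (inhomogeneous) increments and self-similarity of the one-dimensional distributions. The additivity of $\YY^{\rho}(t)$ is free, since by the preceding proposition it is a Sato subordinated multiparameter Brownian motion in the sense of Definition \ref{SubBM}, and such processes are additive (the Sato subordinator $\SS(t)$ has independent non-stationary increments, and time-changing an independent multiparameter Brownian motion by an independent additive subordinator preserves independence of increments). So the only substantive task is to establish self-similarity of the marginal laws.

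For self-similarity, I would substitute $\mu_j = 0$ for all $j$ (which in turn forces $\boldsymbol{\mu}^{\rho}=0$) into the characteristic function \eqref{BMCh}. The drift terms $i\mu_j z_j$ and $i\boldsymbol{u}^T\boldsymbol{\mu}^\rho$ vanish, leaving
\begin{equation*}
\hat{\mu}_t(\zz) = \prod_{j=1}^{d}\exp\bigl\{-2\sqrt{\pi}\bigl[(\beta_j + \tfrac{t^q}{2}\sigma_j^2 z_j^2)^{\alpha}-\beta_j^{\alpha}\bigr]\lambda_j\bigr\}\cdot\exp\bigl\{-2\sqrt{\pi}\bigl[(\beta_Z + \tfrac{t^q}{2}\zz^T\SSigma^{\rho}\zz)^{\alpha}-\beta_Z^{\alpha}\bigr]\lambda_Z\bigr\}.
\end{equation*}
Now I would compare with $\hat{\mu}_1(t^{q/2}\zz)$. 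Because the only dependence on $\zz$ in each bracket is through the quadratic forms $\sigma_j^2 z_j^2$ and $\zz^T\SSigma^\rho\zz$, replacing $\zz$ by $t^{q/2}\zz$ multiplies each quadratic form by $t^q$, and the two expressions coincide. Hence
\begin{equation*}
\hat{\mu}_t(\zz) = \hat{\mu}_1\bigl(t^{q/2}\zz\bigr),\qquad \forall \zz\in\RR^d,
\end{equation*}
so $\YY^{\rho}(t)\stackrel{d}{=} t^{q/2}\YY^{\rho}(1)$.

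With additivity and the scaling identity in hand, I would invoke the classical characterization (see \cite{Sa} and the review in Appendix \ref{Aself}) that an additive process whose time-$t$ law is the image of the time-$1$ law under the map $\xx\mapsto t^{q'}\xx$ is a Sato process with exponent $q'$ and a self-decomposable distribution at unit time. This delivers the conclusion with self-similarity index $q'=q/2$. The only subtle point, which I expect to be the main sanity check rather than an obstacle, is the passage from the drift to the quadratic part: the cancellation of the linear-in-$\zz$ terms is essential, which is precisely why the zero-drift hypothesis is needed and why, by contrast, the L\'evy-subordinated $\rho\alpha$-model with non-zero Brownian drift fails to be self-decomposable at unit time (cf.\ the discussion of Takano's result in the Introduction).
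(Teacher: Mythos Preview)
Your proof is correct and essentially parallels the paper's, but the execution differs slightly. The paper decomposes $\YY^{\rho}(t)$ as a sum of independent subordinated Brownian motions $B_i(X_i(t))$ and $\BB^{\rho}(Z(t))$, invokes Takano's result (\cite{takano1989mixtures}) to obtain self-decomposability of each summand (hence of the convolution) at unit time, and then records the scaling identity $t^{2q}\log(\hat{\phi}_{\AA}(\zz))=\log(\hat{\phi}_{\AA}(t^{q}\zz))$ to conclude self-similarity with exponent $q/2$. You instead work directly from the closed form in Corollary~\ref{chfb}, substitute $\mu_j=0$, and read off $\hat{\mu}_t(\zz)=\hat{\mu}_1(t^{q/2}\zz)$ by the homogeneity of the quadratic forms; self-decomposability of $\YY^{\rho}(1)$ then comes for free from Sato's characterization rather than from Takano. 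Both arguments rest on the same observation---that the zero-drift Gaussian characteristic exponent is homogeneous of degree~$2$---so the difference is organizational: the paper's route is more structural and makes the role of each factor explicit, while yours is a shorter direct computation that avoids the external citation at the cost of relying on the explicit formula \eqref{BMCh}. Either way the self-similarity exponent is $q/2$, matching the paper's (implicit) conclusion.
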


\begin{proof}
Since $\YY^{\rho}(t)=\BB_{A}(\SS(t))$, it is the sum of the  independent subordinated Brownian motions $B_i(S_i(t)),\, i=1,\ldots, d$ and $\BB^{\rho}(S_{d+1}(t))$. It is sufficient to observe that if  Brownian motions in Definition \ref{dipfb} have zero drift  the unit time distribution of $B_i(S_i(t)),\, i=1,\ldots, d$ and $\BB^{\rho}(S_{d+1}(t))$ are self-decomposable (\cite{takano1989mixtures}), therefore  the unit time distribution of  and  $\YY^{\rho}(t)$ is. Furthermore, $t^{2q}\log(\hat{\phi}_{\AA}(\zz))=\log(\hat{\phi}_{\AA}(t^q\zz))$.
\end{proof}
\begin{proposition}
If all the parameters $\rho_{ij}$ in \eqref{abgp}  collapse to 0 across
different components, i.e. $\rho_{ij}=0$, for $i\neq j,$ $\rho_{ij}=1$, for
$i=j,$ the factor-based  Sato subordinated Brownian motion $\YY^{\rho}(t)$ in \eqref{abgp} has the simpler form:
\begin{equation}\label{sbS}
\YY(t)=\BB(\SS(t))=(B_1(S_1(t)), \ldots, B_d(S_d(t))),
\end{equation}
where $\SS(t)$ is the factor-based Sato subordinator in Definition  \ref{Satosub} and $\BB(\ss)$ is a $d$-dimensional multiparameter  Brownian motion with independent components $B_j(s_j)$ that have mean $\mu_j$ and diffusion $\sigma_j$.
\end{proposition}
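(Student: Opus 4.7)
The plan is to match characteristic functions pointwise in $(t,\zz)$. Both \eqref{abgp} with $\rho_{ij}=\delta_{ij}$ and \eqref{sbS} are additive processes with independent increments---$\YY^\rho$ by Definition \ref{dipfb} (Sato subordinators are additive, and the coordinate Brownian pieces are jointly independent), and $\BB(\SS(\cdot))$ by Theorem \ref{SubCh} applied to the multiparameter construction of Section \ref{Subs}. Hence equality of the time-$t$ characteristic functions for every $t\geq 0$ forces equality in law of the two processes, and it suffices to verify the characteristic-function match at a generic $t$.

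First I would compute $\hat{\mu}_{\YY^\rho(t)}(\zz)$ under $\rho_{ij}=\delta_{ij}$. The covariance matrix collapses to $\Sigma^\rho=\diag(\sigma_j^2\alpha_j)$, so $\BB^\rho$ decouples into $d$ independent scalar Brownian motions $B_j^\rho$ with drift $\mu_j\alpha_j$ and variance $\sigma_j^2\alpha_j$. Conditioning on $(X_1(t),\dots,X_d(t),Z(t))$ and using independence of all Brownian pieces yields
\begin{equation*}
\hat{\mu}_{\YY^\rho(t)}(\zz)=\prod_{j=1}^d E\bigl[e^{(i\mu_j z_j-\frac{1}{2}\sigma_j^2 z_j^2)X_j(t)}\bigr]\cdot E\bigl[e^{\sum_{j=1}^d (i\mu_j\alpha_j z_j-\frac{1}{2}\sigma_j^2\alpha_j z_j^2)Z(t)}\bigr].
\end{equation*}

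Next I would compute $\hat{\mu}_{\BB(\SS(t))}(\zz)$ the same way. Since $\BB(\ss)$ has independent Brownian components and $S_j(t)=X_j(t)+a_j Z(t)$ with the $X_j(t)$ mutually independent and independent of $Z(t)$, conditioning on $\SS(t)$ and splitting the Gaussian exponent through the additivity of $S_j(t)$ gives the same product expression, with $a_j$ in place of $\alpha_j$.

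Equating the two---identifying the common-factor weight $a_j$ of Proposition \ref{factor} with the $\alpha_j$ of Definition \ref{dipfb}, which is their natural matching---produces equality of characteristic functions for every $(t,\zz)$, and the proposition follows. The only genuine content beyond bookkeeping is the Gaussian-subordination identity $B_j(\alpha_j Z(t))\stackrel{d}{=}B_j^\rho(Z(t))$ conditional on $Z(t)$, which is immediate from the scaling property of Brownian motion; everything else is arranging independence and applying Fubini.
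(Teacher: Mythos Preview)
Your proposal is correct and follows essentially the same route as the paper: both arguments establish the proposition by matching the time-$t$ characteristic functions of the two processes. The paper substitutes $\rho_{ij}=0$ into the explicit formula of Corollary~\ref{chfb} and then identifies the result with the expression $\exp\{\psi_{\SS}(t^q\log\hat{\phi}_{\II}(\zz))\}$ coming from Theorem~\ref{SubCh}; you instead recompute both characteristic functions from scratch by conditioning on the subordinators, which avoids invoking the explicit $\ets$ formulas and makes transparent that the only substantive ingredient is the Brownian scaling identity $B_j^\rho(Z(t))\stackrel{d}{=}B_j(\alpha_j Z(t))$ conditional on $Z(t)$. Your version is thus slightly more self-contained and does not depend on the specific $\ets$ form of the subordinator, but the underlying strategy is the same.
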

\begin{proof}
By substituting $\rho_{ij}=0$ in \eqref{abgp} we The characteristic function of $\YY^{\rho}(t)$ becomes
\begin{equation*}\label{BMChI}
\begin{split}
\hat{\mu}_t( \zz)=
&\prod_{j=1}^d \exp\{-2\sqrt{\pi}[(\beta_j-  t^q (i\mu_jz_j-\frac{1}{2}\sigma^2z_j^2 )^{\alpha}-\beta_j^{\alpha}]\lambda_j\}\cdot\\
&\cdot\exp\{\Gamma(-\alpha)[(\beta_Z-it^q(\sum_{k=1}^d a_k(\mu_kz_k-\frac{1}{2}\sigma^2z_k^2))^{\alpha}-\beta_Z^{\alpha}]\lambda_Z\}\\
&=\exp\{\psi_{\SS}(t^q \log(\hat{\phi}_{\II}(\zz))\} , \,\,\, \forall  \zz \in \RR^d,\\
\end{split}
\end{equation*}
where $\hat{\phi}_{\II}(\zz)$ is the characteristic function of the multivariate Brownian motion in \eqref{sbS}. From Theorem \ref{SubCh} it follows that  $\hat{\mu}_t(\zz)=\exp\{\psi_{\SS}(t^q \log(\hat{\phi}_{\II}(\zz))\}$   is the characteristic function of $\YY(t)$ in \eqref{sbS}.
\end{proof}
A very useful result for applications and calibration is the following, that can be easily proved using  characteristic functions, see Theorem 5.1, \cite{luciano2010generalized}.
\begin{proposition}\label{margp}
The processes in \eqref{abgp} and in \eqref{sbS} have the same one-dimensional marginal processes in law, that are one-dimensional Sato subordinated Brownian motions.
\end{proposition}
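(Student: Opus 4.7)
The plan is to show that for each $j$ the two processes have the same time-$t$ characteristic function of $Y_j(t)$ and $Y_j^\rho(t)$ at every $t\geq 0$, then invoke additivity with independent increments to upgrade this pointwise-in-$t$ equality of marginal distributions to equality in law of the marginal processes, and finally read off from \eqref{sbS} that the common marginal is a one-dimensional Sato subordinated Brownian motion.

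Write $\Psi_j(z) := i\mu_j z - \tfrac12\sigma_j^2 z^2$ for the log characteristic exponent of $B_j(1)$. For $\YY(t)$ in \eqref{sbS}, whose $j$-th coordinate is $Y_j(t)=B_j(S_j(t))$ with $S_j(t)=X_j(t)+a_jZ(t)$, conditioning on $S_j(t)\geq 0$ gives
\begin{equation*}
\mathbb{E}\bigl[e^{izY_j(t)}\bigr] \;=\; \mathbb{E}\bigl[e^{\Psi_j(z)\,S_j(t)}\bigr],
\end{equation*}
which is well defined because $\mathrm{Re}\,\Psi_j(z)\le 0$ and $S_j(t)\ge 0$. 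For $\YY^\rho(t)$ in \eqref{abgp}, whose $j$-th coordinate is $Y_j^\rho(t)=B_j(X_j(t))+B_j^\rho(Z(t))$, I use independence of $B_j$ and $B_j^\rho$ and of $X_j(t)$ and $Z(t)$, together with the observation that the log characteristic exponent of $B_j^\rho(1)$ is $\alpha_j\Psi_j(z)$ (since $B_j^\rho$ has drift $\mu_j\alpha_j$ and variance $\sigma_j^2\alpha_j$), to get
\begin{equation*}
\mathbb{E}\bigl[e^{izY_j^\rho(t)}\bigr] \;=\; \mathbb{E}\bigl[e^{\Psi_j(z)X_j(t)}\bigr]\cdot\mathbb{E}\bigl[e^{\alpha_j\Psi_j(z)Z(t)}\bigr]\;=\;\mathbb{E}\bigl[e^{\Psi_j(z)(X_j(t)+\alpha_jZ(t))}\bigr].
\end{equation*}
Identifying $a_j=\alpha_j$ as in the $\rho\alpha$-specification underlying Definition \ref{dipfb}, the right-hand side coincides with the expression previously obtained for $Y_j(t)$.

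Both $\{Y_j(t)\}_{t\ge 0}$ and $\{Y_j^\rho(t)\}_{t\ge 0}$ are real-valued additive processes starting at the origin, so their laws are determined by the one-parameter family of one-dimensional distributions $\hat\mu_t$ (equivalently, by the increment characteristic functions $\hat\mu_t/\hat\mu_s$ on $0\le s<t$). Matching $\hat\mu_t$ for every $t$ therefore yields equality in law as processes. The common marginal is $B_j(S_j(t))$, and since $X_j(t)$ and $Z(t)$ are independent Sato subordinators with the same self-similarity index $q$, the sum $S_j(t)=X_j(t)+a_jZ(t)$ is again a Sato subordinator, with $\ets$ unit-time law by Proposition \ref{factor}; hence $Y_j(t)$ is by construction a one-dimensional Sato subordinated Brownian motion. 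The one delicate point is justifying the analytic extension of $\zeta\mapsto\mathbb{E}[e^{\zeta S_j(t)}]$ from the imaginary axis to the closed left half-plane, but this is routine for subordinators because $S_j(t)\ge 0$ almost surely; no genuine obstacle arises, which is why the authors deem the result provable ``easily using characteristic functions.''
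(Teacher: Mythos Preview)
Your argument is correct and is precisely the route the paper intends: the authors do not spell out a proof but state that the result ``can be easily proved using characteristic functions'' and point to Theorem~5.1 in \cite{luciano2010generalized}, which is exactly the computation you carry out (matching the marginal characteristic exponents via $\alpha_j\Psi_j(z)$ for $B_j^{\rho}(1)$ and then using independence of $X_j(t)$ and $Z(t)$). Your additional remark that equality of $\hat\mu_t$ for every $t$ suffices because both coordinate processes are additive is a useful clarification that the paper leaves implicit.
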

As a consequence the processes \eqref{abgp} and \eqref{sbS} clearly have one-dimensional marginal processes that are themselves Sato-subordinated Brownian motions.

 Linear correlation is usually used to calibrate the dependence structure in multivariate models and it is usually constant over time, although this is not a realistic assumption, see e.g. \cite{toth2006increasing}, \cite{teng2016dynamic} and \cite{lundin1998correlation}.
We now show that  linear correlations of these processes  changes over time and have a simple analytical formula.
Standard computations give mean $E[Y^{\rho}_j(t)]$, variance $V[Y^{\rho}_j(t)]$ and correlation $\rho _{\boldsymbol{Y}^{\rho}(t)}(h,j)$ of $\YY^{\rho}(t)$:
\begin{equation*}
\begin{split}
E[Y^{\rho}_j(t)]&=\mu t^qE[S_j];\,\,\,
V[Y^{\rho}_j(t)]=
\sigma^2t^qE[S_j]+\mu^2t^{2q}V[S_j].
\end{split}
\end{equation*}
\begin{equation}\label{Lcorr}
\rho _{\boldsymbol{Y}^{\rho}(t)}(h,j)=\frac{\rho_{hj}\sigma_h\sigma_j\sqrt{a_h}\sqrt{a_j}t^{q}E[Z]+\mu _{j}\mu _{h}a_ja_h t^{2q}V({Z})}{\sqrt{(\sigma_j^2 t^qE[S_j]+\mu_j^2 t^{2q}V[S_j])(\sigma_h^2 t^qE[S_h]+\mu_h^2 t^{2q}V[S_h])}}
\end{equation}
The linear correlation coefficients of $\YY(t)$ in \eqref{sbS} are obtained by assuming $\rho_{ij}=0$.
%
Linear correlations are functions of time. Furthermore,
\begin{equation*}
\lim_{t\rightarrow \infty }\rho _{\boldsymbol{Y}^{\rho}(t)}(l,j)=\rho _{\boldsymbol{S}}(l,j)
\end{equation*}
and
\begin{equation*}
\lim_{t\rightarrow 0}\rho _{\boldsymbol{Y}^{\rho}(t)}(l,j)=\frac{\rho_{hj}\sqrt{a_1a_j}E{[Z]}}{\sqrt{E[S_l]E[S_j]}}.
\end{equation*}
Since $E[S_j]=E[X_j]+a_jE[Z]$, $\lim_{t\rightarrow 0}\rho _{\boldsymbol{Y}^{\rho}(t)}(l,j)= 1$ if we have the limit values $\rho_{ij}=1$ and $E[X_j]=0$, $i,j=1,\ldots, d$.

The unit time distribution of  $\YY^{\rho}(t)$  in \eqref{abgp}  is a multivariate Gaussian mixture. This is the same unit time distributions of the $\rho\alpha$-models L\'evy processes in \cite{LuciSem1} and of the $\rho\alpha$-Sato process in \cite{marena2018multivariate}.

In  \cite{marena2018multivariate} the authors rely on the fact that under proper conditions the  one-dimensional marginal distributions of   $\YY^{\rho}(1)$  are self-decomposable and introduced a multivariate process with Sato one-dimensional dynamics, the $\rho\alpha$-Sato process. This process, as discussed in the introduction is not a multivariate Sato process. Furthermore, although it  introduces time inhomogeneity for single assets, it still has correlations constant over time.
The $\rho\alpha$-model and the $\rho\alpha$-Sato model are recalled in Appendix \ref{Anig}.

In the next section, we specify a distribution for the factor-based Sato subordinated Brownian motion and consider a numerical example to exhibit  the dynamics of correlations.

\subsection{Normal inverse Gaussian case}

\label{NIG}

We specify the factor-based Sato subordinator to have multivariate inverse Gaussian $\ets$ distribution and we choose the parameters to have the same unit time distribution of the factor-based $\rho\alpha$-NIG in \cite{LuciSem1} and of the factor-based multivariate  Sato-NIG in \cite{marena2018multivariate}.
To this purpose we choose the distribution of the subordinator factors according to a standard  parametrization of the univariate IG distribution. An inverse Gaussian (IG) distribution with parameters $%
(a,b)$ is a L\'{e}vy process with the following characteristic function%
\begin{equation*}
\hat{\mu}_{IG}(u)=\exp \left( -a\left( \sqrt{b^{2}-2iu}-b\right) \right) .
\end{equation*}
Let now $\SS=(S_1,\ldots, S_d)$ as in \eqref{onefact} with
\begin{equation}\label{sub}
X_{j}\sim IG\left(  1-{a}\sqrt{a_{j}},\frac{1}{\sqrt{a_{j}}}\right)
,\,j=1,...,n\quad\text{and}\quad Z\sim IG(a,1),
\end{equation}
where
$
0<a<\frac{1}{\sqrt{a_{j}}},\quad j=1,...,n \label{nig_constraint_sub}%
$.
Let now $\gamma_{j},\beta_{j},\delta_{j}$ be such that%
$
\gamma_{j}>0$,$-\gamma_{j}<\beta_j<\gamma_{j}$, $\delta_{j}>0$;
further, let%
$
\frac{1}{\sqrt{a_{j}}}=\delta_{j}\sqrt{\gamma_{j}^{2}-\beta_{j}^{2}}.
$

Let now $\SS(t)$ be the factor-based Sato subordinator in Definition \ref{Satosub} with unit time distribution of $\SS$, its time $t$ characteristic   function $\hat{\mu}_{\SS(t)}(\zz)$ has the form
\begin{equation*}
\begin{split}
\hat{\mu}_{\SS(t)}( \zz)=&\prod_{j=1}^d\exp\{(1-a\sqrt{a_j})[\sqrt{(\frac{1}{a_j}-2it^qz_j)}-\frac{1}{\sqrt{a_j}}]\}\cdot\\
&\cdot\exp\{-a[\sqrt{(1-2i(t^q\sum_{k=1}^d a_kz_k))}-1]\}, \,\,\, \forall  \zz \in \RR^d.\\
\end{split}
\end{equation*}
By construction the unit time distribution of $\SS(t)$ has one-dimensional $IG(1, \frac{1}{\sqrt{a_j}})$ marginal distributions that are self-decomposable. Therefore the processes $S_j(t)$ are one-dimensional Sato processes.
\begin{definition}\label{S-IG}
Let $\boldsymbol{Y}^{\rho}(t)$ be the process  defined in \eqref{abgp}. If $X_j(t),\,\,i=1,\ldots, d$ and $Z(t)$ are Sato subordinators with unit time distributions in \eqref{sub} and if we set $\mu_{j}=\beta_{j}\delta_{j}^{2}$ and $\sigma_{j}=\delta_{j}$ the process $\YY^{\rho}(t)$ is called factor-based Sato-IG subordinated Brownian motion.
\end{definition}

From Proposition \ref{margp} the process $\YY^{\rho}(t)$  in Definition \ref{S-IG} has the following one marginal
processes
\begin{equation}\label{NigMar}
Y^{\rho}_{j}(t)=\beta_{j}\delta_{j}^{2}S_{j}(t)+\delta
_{j}W(S_{j}(t)),
\end{equation}
where $W(t)$ is a standard Brownian motion and equality is in distribution. Therefore the unit time distributions of $\YY^{\rho}_j(t)$ in \eqref{NigMar} are  normal inverse Gaussian distributions with parameters $(\gamma_j, \beta_j, \delta_j)$, that are self-decomposable. The one-dimensional processes $Y^{\rho}_j(t)$, $j=1,\ldots, d$ are Sato subordinated Brownian motions. Therefore in a multivariate asset model the process provides time varying correlations and time inhomogeneity also for single assets.
The process $\YY^{\rho}(t)$ has a total of $2+3n+\frac
{n(n-1)}{2}$ parameters: $a$ and $q$ are common parameters; $\gamma_{j},\beta
_{j},\delta_{j},\,j=1,...,n$ are marginal parameters and $\rho_{ij}%
,i,j=1,...,n,$ are the $\boldsymbol{B}^{\rho}$ correlations.

From \eqref{BMCh}, the  time $t$ characteristic function $\hat{\mu}^{\rho}_t$ of $\YY^{\rho}(t)$,  is%
\begin{equation*}%
\begin{split}
\hat{\mu}_{t}^{\rho}(\boldsymbol{u})  &  =-\prod_{j=1}%
^{d}\exp\left\{  (1-\frac{a}{\zeta_{j}})\left(  \sqrt{(\zeta_{j}^{2}-2t^q(i\beta_{j}\delta_{j}^{2}%
u_{j}+\frac{1}{2}\delta_{j}^{2}u_{j}^{2}))}-\zeta_{j}\right)
\right. \\
&  \left.  -a\left(  \sqrt{1-2t^{q}(i\boldsymbol{u}^{T}\boldsymbol{\mu}^{\rho}%
-\frac{1}{2}\boldsymbol{u}^{T}\boldsymbol{\Sigma}^{\rho}\boldsymbol{u}%
)}-1\right)  \right\}
\end{split}
\label{raNIGch}%
\end{equation*}
where $\zeta_{j}=\delta_{j}\sqrt{\gamma_{j}^{2}-\beta_{j}^{2}}.$ The linear correlations
  are given in \eqref{Lcorr} with $\sigma_j=\delta_j$, $\mu_j=\beta\delta_j^2$, $\sqrt{a_j}=\frac{1}{\zeta_j}$, $E[Z]=a$ and $V[Z]=a$ where $a\leq min\{\frac{1}{\sqrt{a_j}}\}$.
%
%
The $\rho\alpha$-NIG in \cite{LuciSem1} is the L\'evy process with unit time characteristic function $\hat{\mu}^{\rho}_1(\zz)$. A $\rho\alpha$-NIG process $\YY^L(t)$ has correlations constant over time and $\rho_{\YY^L(t)}(i,j)=\rho_{\boldsymbol{Y}^{\rho}(1)}(i,j)$, for each $t>0$.
The marginal   Sato-NIG  process $\XX(t)$ introduced in \cite{marena2018multivariate} has marginal Sato processes with different Sato exponents. The $\rho\alpha$-NIG and the Sato-NIG processes are both  recalled in Appendix \ref{Anig}, see equations \eqref{abgp2} and \eqref{Sabgp}, respectively.

We consider here the subcase with a common Sato exponent, since  the Sato exponent do not affect correlations of the Sato-NIG process. In this subcase the Sato-NIG process  has characteristic function given by:
\begin{equation*}
\hat{\mu}_{\XX}(\zz)=\hat{\mu}_{1}^{\rho}(t^q\zz),
\end{equation*}
where $\hat{\mu}_{1}^{\rho}(\zz)$ is the unit time  characteristic function in \eqref{raNIGch}. Although the Sato-NIG has time inhomogeneous increments it has
correlations  constant over time. The Sato-NIG correlations are $\rho_{\XX(t)}(i,j)=\rho_{\boldsymbol{Y}^{\rho}(1)}(i,j)$, for each $t>0$.


Notice that, in the symmetric case, the factor-based S-IG subordinated Brownian motion in \eqref{S-IG} and the   Sato-NIG  process have the same law at any time $t$ (by changing the parametrization of the Sato exponent), according to Corollary \ref{cor}. This implies that we have time varying correlations  in the asymmetric case.
The empirical investigation of the fit of this model on real data is out of the aim of this paper. However,
the next section provides a numerical example to show possible  dynamics for  correlations, using realistic parameters.
\subsection{Numerical illustration}

Define a bidimensional price process, $\boldsymbol{S}=\{\boldsymbol{S}%
(t),\,t\geq 0\}$, by%
\begin{equation*}
\boldsymbol{S}(t)=\boldsymbol{S}(0)\exp (\boldsymbol{c}t+\boldsymbol{Y}%
(t)),\,\boldsymbol{c}\in \mathbb{R}^{n},  \label{drift}
\end{equation*}%
where $\boldsymbol{c}$ is the drift term (equivalently, $S_{j}(t)=S_{j}(0)%
\exp (c_{j}t+Y_{j}(t)),\,t\geq 0,j=1,2.$).

Both the Sato-NIG process and the $\rho\alpha$-NIG L\'evy processes are used to model the return process $\YY(t)$.
These two processes, assuming the same (unit time) marginal parameters,   have the same correlations that are constant over time and are equal to unit time correlation of the factor-based S-IG Brownian motion.
In  \cite{luciano2016dependence} the authors performed an estimate on real data of the $\rho\alpha$-NIG model under
the historical measure.
They considered daily log-returns on MSCI US Investable Market Indices from
January 2, 2009 to May 31st, 2013.  They had 10 indices, with a total of 1109 observations.
The calibration was  performed in two
steps.
The first consisted in fitting the marginal parameters from marginal
return data; the second in selecting the common parameters by matching the
historical return correlation matrix. The marginal return
parameters were calibrates by maximum likelihood (MLE).

We use their estimates of the marginal parameters $\gamma_j, \beta_j, \delta_j$ of the first two indices (consumer discretionary (CD) and consumer staples (CS))  and move the correlation parameters and the Sato exponent to exhibit possible correlation  dynamics  over time in a bivariate setting.

The   estimated marginal parameters $\gamma_j, \beta_j, \delta_j$  are taken from \cite{luciano2016dependence}, Table 2 and they are  reported  in Table \ref{table_param}.

Notice  that the Sato exponent $q$, that drives the correlation dynamics,  is both a common and  a marginal parameter.

\begin{table}[tbp]
\centering
{\footnotesize \centering
\centering
\begin{tabular}{r|cccc}
\hline
Index &  $\gamma$ & $\beta$ & $\delta$ \\
\hline
CD &  51.7708 & -5.0441 & 0.0112 \\
CS &  108.3392 & -12.8277 & 0.0076 \\
EN &  54.9486 & -6.0927 & 0.0155
\\
FN &  22.7119 & -1.7045 & 0.0113 \\
HC &  82.5935 & -13.7078 & 0.0090  \\
IN &  45.0711 & -5.2494 & 0.0115  \\
IT &  57.3094 & -4.3395 & 0.0114  \\
MT &  54.3748 & -7.4708 & 0.0159 \\
TC &  81.6045 & -12.0085 & 0.0101 \\
UT &  97.9514 & -7.5590 & 0.0098 \\
\hline
\end{tabular}
}
\caption{Maximum likelihood estimates of marginal NIG return distributions for the $\protect\rho \protect\alpha $NIG
 distribution from \cite{luciano2016dependence}. MSCI US Investable Market Index from January 2, 2009 to May
31st, 2013.}
\label{table_param}
\end{table}


We show the correlation dynamics over time of the two indices CD and CS for different values of the subordinator common parameter $a$, the Brownian motion correlation $\rho$ and the the Sato exponent $q$. 
 We change one parameter at the time.
We set two values for $a$: $a=\frac{a_{max}}{2}; a_{max}-0.01$ (the case $a=0$ is independence).
We set three values for $\rho$: $\rho=-0.5; \rho=0.5; \rho=0.99$.
We set three values for $q$: $q=0.5; q=1; q=1.5$.

Figure \ref{corr-dyn} illustrates  different dynamics of the time varying correlation if the return process is modelled by  $\YY^{\rho}(t)$ in \eqref{S-IG}.  Figure \ref{corr-dynrho0} exhibits four  different  correlation dynamics for the subcase with independent Brownian motions, i.e. $\rho=0$ and the return process becomes $\YY(t)$   in \eqref{sbS}. We consider t=1109 as time horizon, since it  is the number of daily observation in the sample. We exhibit two set of parameters for two pair of indices because all the other cases have similar dynamics. We can see that  correlation changes in time, it can cover a wide range and it can be increasing or decreasing over time for both the processes $\YY^{\rho}(t)$ and its subcase $\YY(t)$.

\begin{figure}
\centering
\subfloat[CASE1]{\includegraphics[scale=0.15]{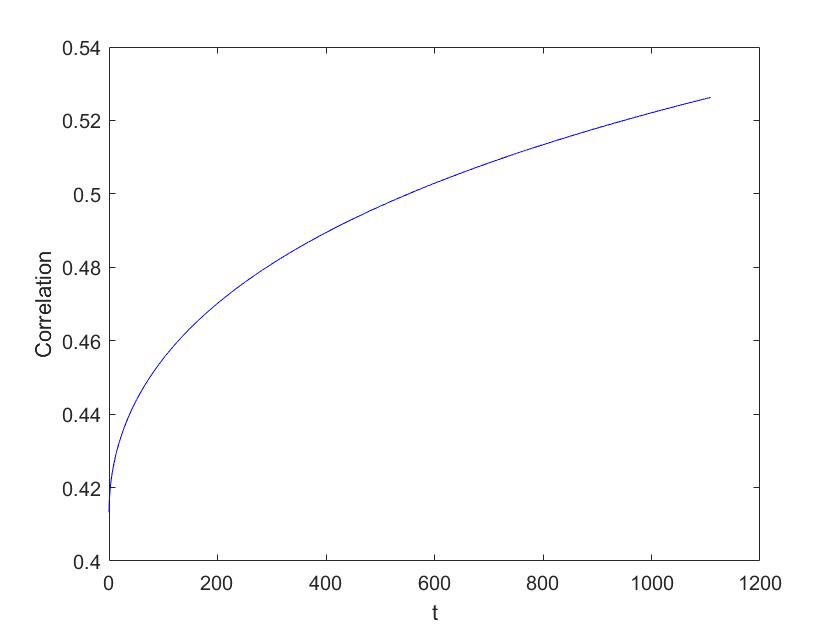}}
\subfloat[CASE2]{\includegraphics[scale=0.15]{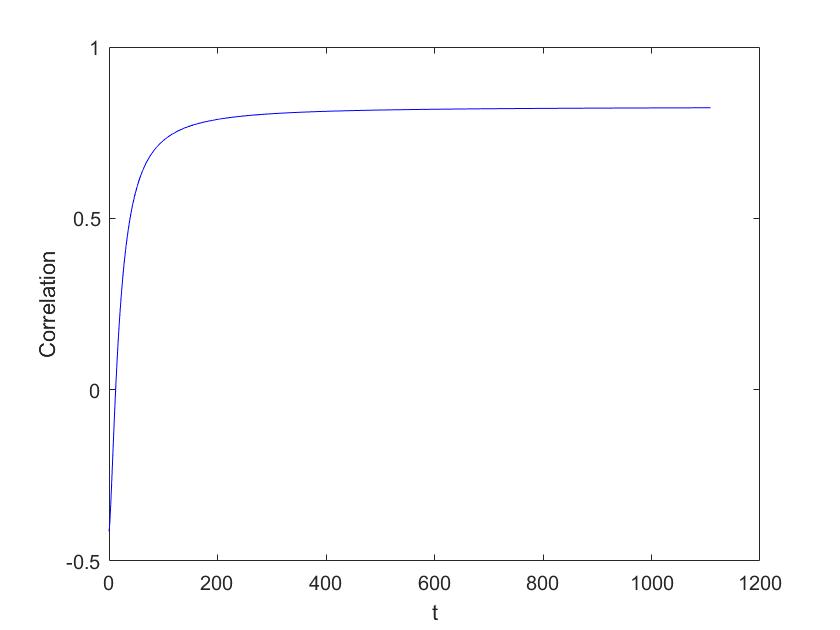}}\newline
\subfloat[CASE3]{\includegraphics[scale=0.15]{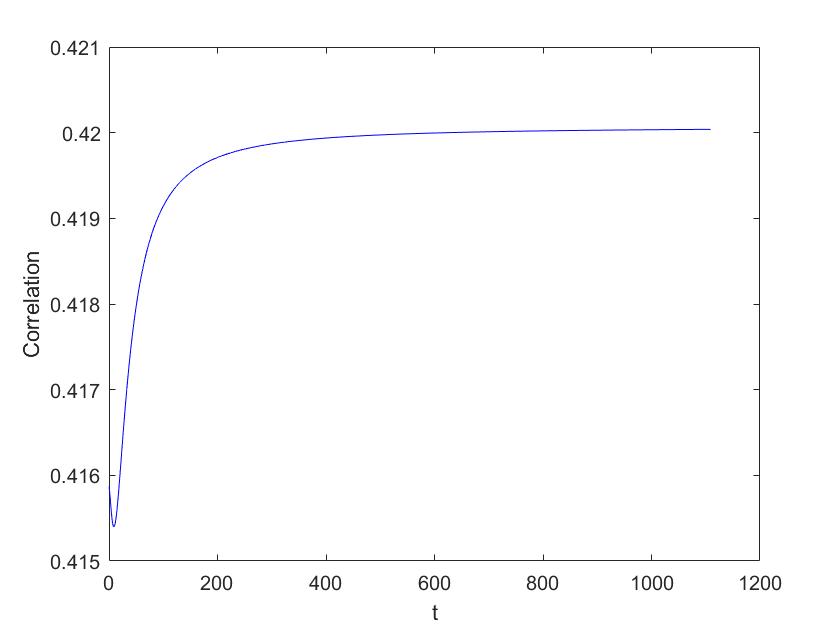}}
\subfloat[CASE4]{\includegraphics[scale=0.15]{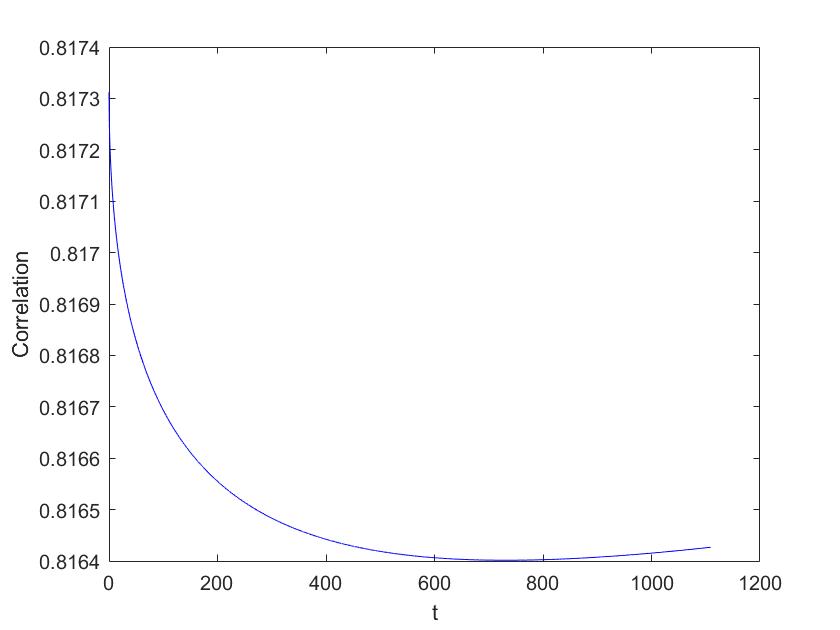}}\newline
\caption{Correlation dynamics of indicies CD, CS for the four choices of the common  parameters:
CASE1: $a=0.5671$, $\rho=0.5$, $q=0.5$; CASE2: $a=0.5671$, $\rho=-0.5$, $q=1.5$
 CASE3: $a=0.2885$, $\rho=0.99$, $q=1.5$; CASE4: $a=amax$, $\rho=0.99$, $q=0.5$ }
\label{corr-dyn}
\end{figure}

Tables \ref{table_corr} and \ref{table_corr2} provides the limit correlations at time $0$ and $t\rightarrow \infty$ and the unit time correlation. Depending on $a, \rho$ and $q$, with the same (unit time)  marginal parameters, we have different ranges spanned from correlation over time.

\begin{table}[h!]
\centering
{\footnotesize \centering
\centering
\begin{tabular}{r|cccc}
\hline
Pair &  $\lim_{t\rightarrow 0}\rho _{\boldsymbol{Y}^{\rho}(t)}(l,j)$ & $\lim_{t\rightarrow \infty}\rho _{\boldsymbol{Y}^{\rho}(t)}(l,j)=\rho_{\SS}$ & $\rho_{\YY(1)}$ \\
\hline
CASE1   & 0.4128 & 0.8256 & 0.4175 \\ \hline
CASE2   &     -0.4128
 &    0.8256&   -0.3984\\ \hline
CASE3   &   0.4159& 0.4201& 0.4158\\ \hline
CASE4&  0.8173&   0.8256 &  0.8172\\ \hline
\end{tabular}
}
\caption{Correlations bound for the pair CD, CS and parameters $a$, $\rho$ and $q$ corresponding to the fours cases in Figure \ref{corr-dyn}.}
\label{table_corr}
\end{table}

\begin{figure}[h!]
\centering
\subfloat[CASE01]{\includegraphics[scale=0.15]{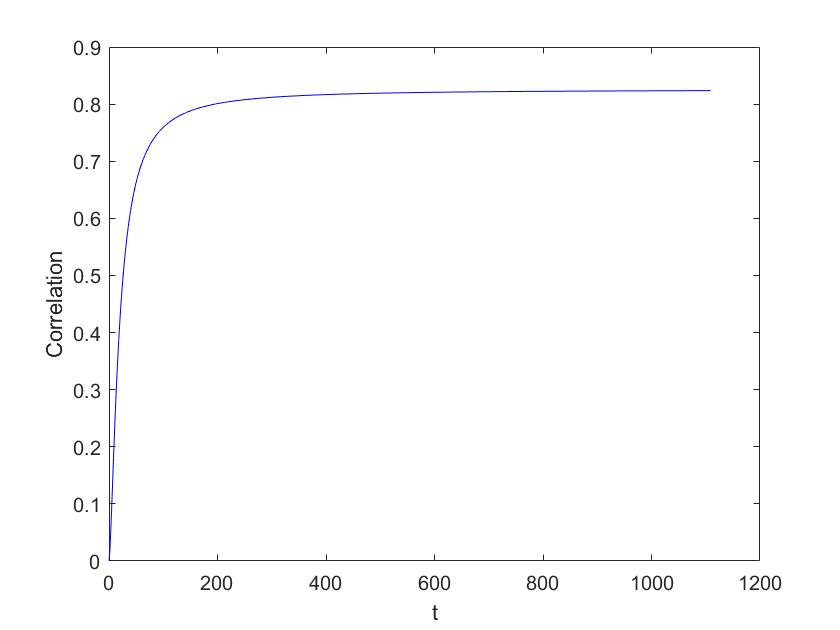}}
\subfloat[CASE02]{\includegraphics[scale=0.15]{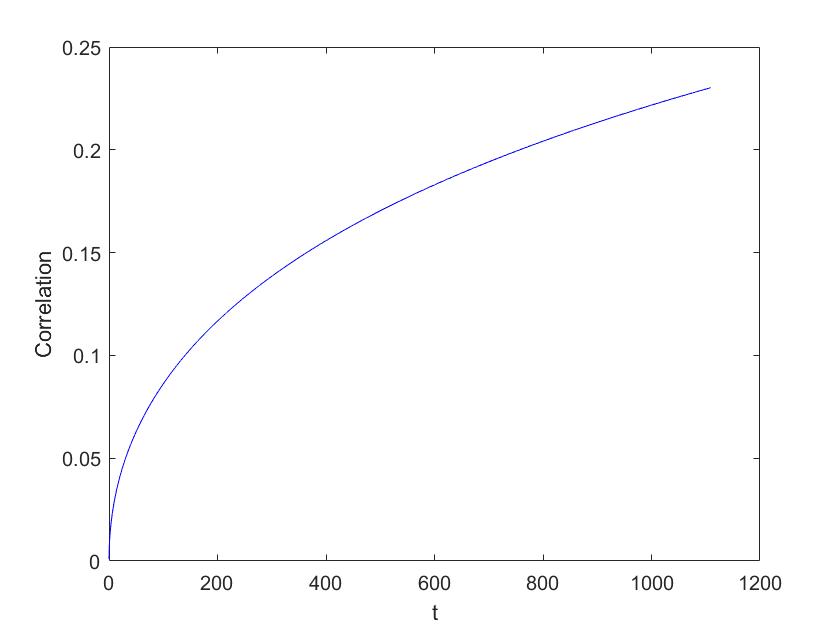}}\newline
\subfloat[CASE03]{\includegraphics[scale=0.15]{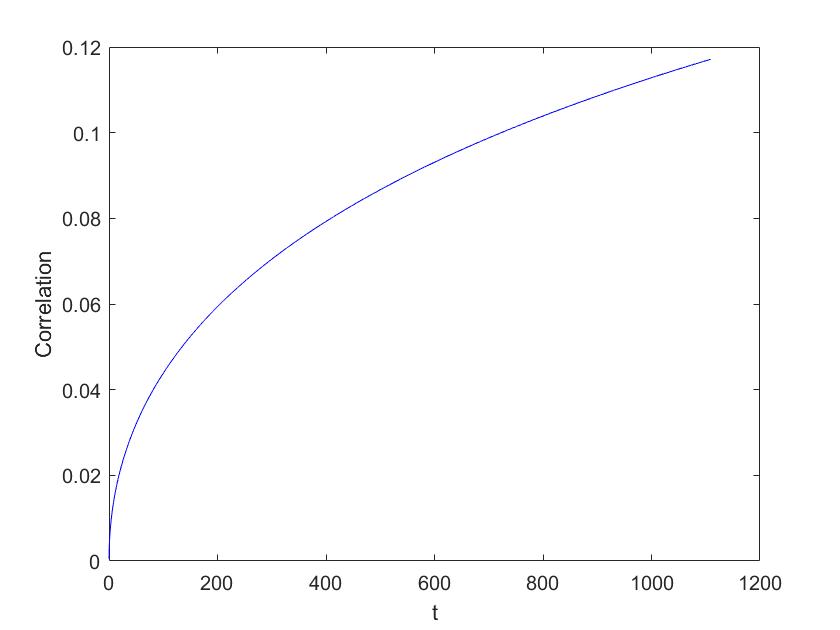}}
\subfloat[CASE04]{\includegraphics[scale=0.15]{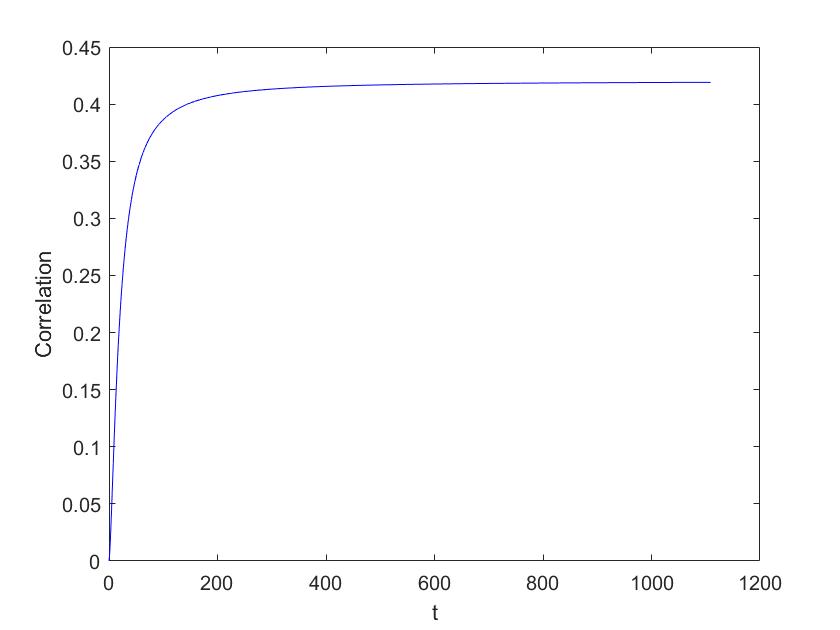}}\newline
\caption{Correlation dynamics of indicies CD, CS for independent Brownian motions and the four choices of the remaining common  parameters:
CASE01: $a=0.5671$, $q=1.5$; CASE02: $a=0.5671$, $q=0.5$
 CASE03: $a=0.2885$, $q=0.5$; CASE04: $a=0.5671$, $q=1.5$ }
\label{corr-dynrho0}
\end{figure}

\begin{table}[h!]
\centering
{\footnotesize \centering
\centering
\begin{tabular}{r|cccc}
\hline
Pair &  $\lim_{t\rightarrow 0}\rho _{\boldsymbol{Y}^{\rho}(t)}(l,j)$ & $\lim_{t\rightarrow \infty}\rho _{\boldsymbol{Y}^{\rho}(t)}(l,j)=\rho_{\SS}$ & $\rho_{\YY(1)}$ \\
\hline
CASE01   & 0 & 0.8256 & 0.0095 \\ \hline
CASE02   & 0 & 0.8256 & 0.0095 \\ \hline
CASE03   &   0& 0.4201& 0.0048\\ \hline
CASE03 &   0& 0.4201& 0.0048\\ \hline
\end{tabular}
}
\caption{Correlations bound for the pair CD, CS and parameters $\rho=0$,  $a$ and $q$ corresponding to the fours cases in Figure \ref{corr-dynrho0}.}
\label{table_corr2}
\end{table}
\section{Conclusion}\label{conc}

We have introduced and characterized  a self-decomposable  class of multivariate exponential tempered distributions and the associated multivariate Sato subordinators.  Then we have constructed a multivariate additive process able to incorporate time inhomogeneity by additive subordination of a multivariate Brownian motion.
The main contribution of Sato subordination  is to provide time varying correlations, while remaining parsimonious in the number of parameters and providing a good fit on financial data.
Although the empirical investigation of Sato subordination is beyond the purpose of this paper with a toy example we have shown  that the model is capable to span a wide range of correlation over time and to exhibit different correlation dynamics for given unit time marginal parameters.

The next steps of our research are in two directions. The first direction is to empirically study the fit of factor-based Sato subordinated Brownian motions on financial data and study their moments term structure.

The second direction is to weaken the assumption of a Sato exponent common to all assets.
In this work, we built on self-decomposability of multivariate $\ets$ distributions and considered a multivariate  Sato subordinator with scaling parameter $q$ common to all the one-dimensional processes.
 Since the factor-based S-IG subordinator  has self-decomposable  marginal distributions, a natural extension is to allow each marginal process to have its own Sato exponent as for the Sato-NIG in  \cite{marena2018multivariate}, but preserving time varying correlation.  To this aim a further step  will be to study   operator self-decomposability of multivariate $\ets$-distributions and the  associated Sato subordinators. The study of multivariate Sato  subordination of more general Markov processes is also in the agenda of our future research.
\section*{Acknowledgements}

The author wishes to thank Marina Marena and Andrea Romeo for the helpful discussions she had with them. The author also gratefully acknowledge financial support from the Italian
Ministry of Education, University and Research (MIUR), "Dipartimenti di
Eccellenza" grant 2018-2022.

\appendix
\appendixpage
\section{Self-decomposability and Sato processes}\label{Aself}

The L\'evy measure of a self-decomposable distribution $\mu$  is of the form  (\cite{Sa}, Theorem 15.10):

\begin{equation}\label{sd1}
\nu(E)=\int_{S^{d-1}}\int_0^{\infty}\boldsymbol{1}_E(r\w)\frac{k_{\w}(r)}{r}dr\lambda(d\w)
\end{equation}
where $\lambda$ is a finite measure on $S^{d-1}$, $k_{\w}(r)\geq 0$ is measurable in $\w$, decreasing in $r>0$.
The L\'evy measure of a  self-decomposable distribution on $\RR$ has the form: 
\begin{equation*}
\begin{split}
\nu(E)&=\int_{\RR}\boldsymbol{1}_E(x)\frac{k(x)}{||x||}dx\,\
\end{split}
\end{equation*}
where $k(x)\geq 0$ is increasing on $(-\infty, 0)$ and decreasing on $(0, \infty)$.
Therefore a multivariate measure $\mu\in L(\RR^d)$ is self-decomposable if and only if its radial component is self-decomposable.
The characteristic function of a $d$-dimensional self-decomposable distribution $\mu$  in cartesian coordinates has  the form
\begin{equation*}\label{sdcart}
\hmu(\zz)=\exp\{i\boldsymbol{\gamma}\cdot \zz+\int_{\RR^d}(e^{i\langle\zz,\xx\rangle}-1-i\langle\zz,\xx\rangle\boldsymbol{1}_{||\xx||\leq 1}(\xx))\frac{h(\xx)}{||\xx||}\tilde{\lambda}(d\xx)\},
\end{equation*}
where
\begin{equation}\label{tildelambda}
\tilde{\lambda}(E)=\int_{S^{d-1}}\int_0^{\infty} \boldsymbol{1}_E(r\w)dr\lambda(d\w).
\end{equation}
and  $h(\xx)=k_{\frac{\xx}{||\xx||}}(||\xx||)$.
Thus
\begin{equation*}
\begin{split}
\nu(E)&=\int_{S^{d-1}}\int_0^{\infty}\boldsymbol{1}_E(r\w)\frac{k_{\w}(r)}{r}dr\lambda(d\w)\\
&=\int_{\RR^d}\boldsymbol{1}_E(\xx)\frac{k_{\frac{x}{||\xx||}}(||\xx||)}{||\xx||}\tilde{\lambda}(d\xx).
\end{split}
\end{equation*}
If  $\lambda$ is the measure $\sigma$ induced on $S^{d-1}$ by  the Lebesgue measure on $\RR^d$ we have
\begin{equation}\label{muLeb}
\hmu(\zz)=\exp\{i\boldsymbol{\gamma}\cdot \zz+\int_{\RR^d}(e^{i\langle\zz,\xx\rangle}-1-i\langle\zz,\xx\rangle\boldsymbol{1}_{||\xx||\leq 1})\frac{h(\xx)}{||\xx||^d}d\xx\},
\end{equation}

A distribution is self-decomposable if and only if for any fixed $q>0$ it is the distribution of $\YY(1)$ for some additive process $\{\YY(t),\,  t>0\}$ which is q-self-similar (see \cite{Sa} as a standard reference for self-similar processes 
). A process $\YY(t)$ is $q$-self-similar if for each $a>0$
\begin{equation*}
\boldsymbol{Y}(a t)\overset{\mathcal{L}}{=}a^q\boldsymbol{Y}(t).
\end{equation*}
where $\overset{\mathcal{L}}{=}$ denotes  equality in distribution of processes.
The probability law of a Sato process at time $t$ is obtained by scaling a
self-decomposable law $\mu$ (see
\cite{carr2007self}).  If $\YY(t)$ is a Sato process, we have :
\begin{equation*}
\boldsymbol{Y}(t)\overset{\mathcal{L}}{=}t^{q}\boldsymbol{Y},
\end{equation*}
where $\YY\sim \mu$ and $q$ is the self-similar exponent.
The time $t$ characteristic function of $\YY(t)\sim\mu_t$  is given by
\begin{equation*}
\hat{\mu}_t( \zz)=\hat{\mu}(  t^q\zz), \,\,\, \forall  \zz \in \RR^d.
\end{equation*}

\section{Factor-based $\rho\alpha$models}\label{Anig}

We first introduce the L\'evy  class of factor-based multivariate subordinators used
to construct the $\mathbb{R}^{n}$-valued asset return process $\{\boldsymbol{%
Y}(t),\,t\geq 0\}$. A multidimensional factor-based subordinator $\{%
\boldsymbol{G}(t),t\geq 0\}$ is defined as follows%
\begin{equation}
\boldsymbol{G}(t)=(X_{1}(t)+\alpha _{1}Z(t),...,X_{n}(t)+\alpha
_{n}Z(t)),\quad \alpha _{j}>0,\,j=1,...,n,  \label{subg}
\end{equation}%
where $\boldsymbol{X}(t)=\{(X_{1}(t),...,X_{n}(t)),t\geq 0\}$ and $%
\{Z(t),t\geq 0\}$ are independent subordinators with zero drift, and $%
\boldsymbol{X}(t)$ has independent components.
Let $\boldsymbol{B}(\boldsymbol{s})$
  and  $\boldsymbol{B}^{\rho }(t)=(B_{1}^{\rho
}(t),...,B_{n}^{\rho }(t))$ be the multivariate Brownian motions in Definition \ref{dipfb}, independent
of $\boldsymbol{B}(t)$.
The $\mathbb{R}^{n}$-valued subordinated process $\{\boldsymbol{Y}^L(t),t>0\}$
defined by%
\begin{equation}
\boldsymbol{Y}^L(t) =\left(
\begin{array}{c}
B_{1}(X_{1}(t))+B_{1}^{\rho }(Z(t)) \\
.... \\
B_{n}(X_{n}(t))+B_{n}^{\rho }(Z(t))%
\end{array}%
\right) ,  \label{abgp2}
\end{equation}%
where $X_{j}(t)$ and $Z(t)$ are independent subordinators, independent of $%
\boldsymbol{B}(t)$ and $\boldsymbol{B}^{\rho }(t)$ is a factor-based
subordinated Brownian motion, called $\rho\alpha$-model. Clearly if $\GG(1)$ has the same unit time distribution of   $\SS(1)$  in Definition  \ref{Satosub}, also $\YY^L(1)$ and $\YY^{\rho}(1)$ have the same distribution.

Let now $\boldsymbol{Y}^{L}(t)$ be the process  defined in \eqref{abgp2}. If $X(t)$ and $Z(t)$ are L\'evy subordinators with unit time distribution in \eqref{sub} and if we set $\mu_{j}=\beta_{j}\delta_{j}^{2}$ and $\sigma_{j}=\delta_{j}$ the process $\YY^{L}(t)$ is the $\rho\alpha$-NIG process in \cite{LuciSem1}.
Obviously $\YY^L(t)$ and $\YY^{\rho}(1)$ in Definition \ref{S-IG} have the same  distribution at unit time.

In \cite{marena2018multivariate} a Sato version of the $\rho\alpha$-NIG has been introduced.
 This process is termed
$\rho\alpha$-Sato NIG and it defined by:%

\begin{equation}
\boldsymbol{\XX}(t):\overset{\mathcal{L}}{=}t^{\boldsymbol{h}}\boldsymbol{Y}%
(1):=\left(
\begin{array}
[c]{c}%
t^{h_{1}}(B_{1}(X_{1}(1))+B_{1}^{\rho}(Z(1)))\\
....\\
t^{h_{n}}(B_{n}(X_{n}(1))+B_{n}^{\rho}(Z(1)))
\end{array}
\right)  , \label{Sabgp}%
\end{equation}
where $\boldsymbol{h}=(h_{1},\dots,h_{n})$ is the self-similar exponent. The unit time random vectors  $\XX(1)$, $\YY^L(1)$ and $\YY^{\rho}(1)$ have the same distribution.

 The process $\XX(t)$ is additive and  its distribution at unit time has one-dimensional normal inverse Gaussian marginals. Therefore it  has  one-dimensional marginal Sato processes (the normal inverse Gaussian distribution is self-decomposable).
In this work we consider the subcase  $h_1=\ldots=h_n=q$.
\bibliographystyle{apalike}
\bibliography{Biblio}

\end{document}